\newtheorem{theorem}{Theorem}
\newtheorem{algorithm}[theorem]{Algorithm}
\newtheorem{axiom}[theorem]{Axiom}
\newtheorem{conjecture}[theorem]{Conjecture}
\newtheorem{corollary}[theorem]{Corollary}
\newtheorem{definition}[theorem]{Definition}
\newtheorem{example}[theorem]{Example}
\newtheorem{exercise}[theorem]{Exercise}
\newtheorem{lemma}[theorem]{Lemma}
\newtheorem{proposition}[theorem]{Proposition}
\newtheorem{remark}[theorem]{Remark}
\newenvironment{proof}[1][Proof]{\noindent\textbf{#1.} }{\ \rule{0.5em}{0.5em}}
\chardef\@x10\chardef\@xv60
\def\tcitime{
\def\@time{%
  \@minute\time\@hour\@minute\divide\@hour\@xv
  \ifnum\@hour<\@x 0\fi\the\@hour:%
  \multiply\@hour\@xv\advance\@minute-\@hour
  \ifnum\@minute<\@x 0\fi\the\@minute
  }}%
\def\x@hyperref#1#2#3{%
   \catcode`\~ = 12
   \catcode`\$ = 12
   \catcode`\_ = 12
   \catcode`\# = 12
   \catcode`\& = 12
   \y@hyperref{#1}{#2}{#3}%
}
\def\y@hyperref#1#2#3#4{%
   #2\ref{#4}#3
   \catcode`\~ = 13
   \catcode`\$ = 3
   \catcode`\_ = 8
   \catcode`\# = 6
   \catcode`\& = 4
}
\def\QCTOpt[#1]#2{%
  \def\QCTOptB{#1}
  \def\QCTOptA{#2}
}
\def\QCTNOpt#1{%
  \def\QCTOptA{#1}
  \let\QCTOptB\empty
}
\def\Qct{%
  \@ifnextchar[{%
    \QCTOpt}{\QCTNOpt}
}
\def\QCBOpt[#1]#2{%
  \def\QCBOptB{#1}%
  \def\QCBOptA{#2}%
}
\def\QCBNOpt#1{%
  \def\QCBOptA{#1}%
  \let\QCBOptB\empty
}
\def\Qcb{%
  \@ifnextchar[{%
    \QCBOpt}{\QCBNOpt}%
}
\def\PrepCapArgs{%
  \ifx\QCBOptA\empty
    \ifx\QCTOptA\empty
      {}%
    \else
      \ifx\QCTOptB\empty
        {\QCTOptA}%
      \else
        [\QCTOptB]{\QCTOptA}%
      \fi
    \fi
  \else
    \ifx\QCBOptA\empty
      {}%
    \else
      \ifx\QCBOptB\empty
        {\QCBOptA}%
      \else
        [\QCBOptB]{\QCBOptA}%
      \fi
    \fi
  \fi
}
\def\GRAPHICSPS#1{%
 \ifcase\GRAPHICSTYPE
   \special{ps: #1}%
 \or
   \special{language "PS", include "#1"}%
 \fi
}%
\def\graffile#1#2#3#4{%
    \bgroup
	   \@inlabelfalse
       \leavevmode
       \@ifundefined{bbl@deactivate}{\def~{\string~}}{\activesoff}%
        \raise -#4 \BOXTHEFRAME{%
           \hbox to #2{\raise #3\hbox to #2{\null #1\hfil}}}%
    \egroup
}%
\def\draftbox#1#2#3#4{%
 \leavevmode\raise -#4 \hbox{%
  \frame{\rlap{\protect\tiny #1}\hbox to #2%
   {\vrule height#3 width\z@ depth\z@\hfil}%
  }%
 }%
}%
\let\nographics=\@msidraft
\newif\ifwasdraft
\def\GRAPHIC#1#2#3#4#5{%
   \ifnum\@msidraft=\@ne\draftbox{#2}{#3}{#4}{#5}%
   \else\graffile{#1}{#3}{#4}{#5}%
   \fi
}
\def\addtoLaTeXparams#1{%
    \edef\LaTeXparams{\LaTeXparams #1}}%
\newif\ifBoxFrame \BoxFramefalse
\newif\ifOverFrame \OverFramefalse
\newif\ifUnderFrame \UnderFramefalse
\def\BOXTHEFRAME#1{%
   \hbox{%
      \ifBoxFrame
         \frame{#1}%
      \else
         {#1}%
      \fi
   }%
}
\def\doFRAMEparams#1{\BoxFramefalse\OverFramefalse\UnderFramefalse\readFRAMEparams#1\end}%
\def\readFRAMEparams#1{%
 \ifx#1\end%
  \let\next=\relax
  \else
  \ifx#1i\dispkind=\z@\fi
  \ifx#1d\dispkind=\@ne\fi
  \ifx#1f\dispkind=\tw@\fi
  \ifx#1t\addtoLaTeXparams{t}\fi
  \ifx#1b\addtoLaTeXparams{b}\fi
  \ifx#1p\addtoLaTeXparams{p}\fi
  \ifx#1h\addtoLaTeXparams{h}\fi
  \ifx#1X\BoxFrametrue\fi
  \ifx#1O\OverFrametrue\fi
  \ifx#1U\UnderFrametrue\fi
  \ifx#1w
    \ifnum\@msidraft=1\wasdrafttrue\else\wasdraftfalse\fi
    \@msidraft=\@ne
  \fi
  \let\next=\readFRAMEparams
  \fi
 \next
 }%
\def\IFRAME#1#2#3#4#5#6{%
      \bgroup
      \let\QCTOptA\empty
      \let\QCTOptB\empty
      \let\QCBOptA\empty
      \let\QCBOptB\empty
      #6%
      \parindent=0pt
      \leftskip=0pt
      \rightskip=0pt
      \setbox0=\hbox{\QCBOptA}%
      \@tempdima=#1\relax
      \ifOverFrame
          \typeout{This is not implemented yet}%
          \show\HELP
      \else
         \ifdim\wd0>\@tempdima
            \advance\@tempdima by \@tempdima
            \ifdim\wd0 >\@tempdima
               \setbox1 =\vbox{%
                  \unskip\hbox to \@tempdima{\hfill\GRAPHIC{#5}{#4}{#1}{#2}{#3}\hfill}%
                  \unskip\hbox to \@tempdima{\parbox[b]{\@tempdima}{\QCBOptA}}%
               }%
               \wd1=\@tempdima
            \else
               \textwidth=\wd0
               \setbox1 =\vbox{%
                 \noindent\hbox to \wd0{\hfill\GRAPHIC{#5}{#4}{#1}{#2}{#3}\hfill}\\%
                 \noindent\hbox{\QCBOptA}%
               }%
               \wd1=\wd0
            \fi
         \else
            \ifdim\wd0>0pt
              \hsize=\@tempdima
              \setbox1=\vbox{%
                \unskip\GRAPHIC{#5}{#4}{#1}{#2}{0pt}%
                \break
                \unskip\hbox to \@tempdima{\hfill \QCBOptA\hfill}%
              }%
              \wd1=\@tempdima
           \else
              \hsize=\@tempdima
              \setbox1=\vbox{%
                \unskip\GRAPHIC{#5}{#4}{#1}{#2}{0pt}%
              }%
              \wd1=\@tempdima
           \fi
         \fi
         \@tempdimb=\ht1
         \advance\@tempdimb by -#2
         \advance\@tempdimb by #3
         \leavevmode
         \raise -\@tempdimb \hbox{\box1}%
      \fi
      \egroup%
}%
\def\DFRAME#1#2#3#4#5{%
  \hfil\break
  \bgroup
     \leftskip\@flushglue
	 \rightskip\@flushglue
	 \parindent\z@
	 \parfillskip\z@skip
     \let\QCTOptA\empty
     \let\QCTOptB\empty
     \let\QCBOptA\empty
     \let\QCBOptB\empty
	 \vbox\bgroup
        \ifOverFrame 
           #5\QCTOptA\par
        \fi
        \GRAPHIC{#4}{#3}{#1}{#2}{\z@}%
        \ifUnderFrame 
           \break#5\QCBOptA
        \fi
	 \egroup
   \egroup
   \break
}%
\def\FFRAME#1#2#3#4#5#6#7{%
  \@ifundefined{floatstyle}
    {
     \begin{figure}[#1]%
    }
    {
	 \ifx#1h
      \begin{figure}[H]%
	 \else
      \begin{figure}[#1]%
	 \fi
	}
  \let\QCTOptA\empty
  \let\QCTOptB\empty
  \let\QCBOptA\empty
  \let\QCBOptB\empty
  \ifOverFrame
    #4
    \ifx\QCTOptA\empty
    \else
      \ifx\QCTOptB\empty
        \caption{\QCTOptA}%
      \else
        \caption[\QCTOptB]{\QCTOptA}%
      \fi
    \fi
    \ifUnderFrame\else
      \label{#5}%
    \fi
  \else
    \UnderFrametrue%
  \fi
  \begin{center}\GRAPHIC{#7}{#6}{#2}{#3}{\z@}\end{center}%
  \ifUnderFrame
    #4
    \ifx\QCBOptA\empty
      \caption{}%
    \else
      \ifx\QCBOptB\empty
        \caption{\QCBOptA}%
      \else
        \caption[\QCBOptB]{\QCBOptA}%
      \fi
    \fi
    \label{#5}%
  \fi
  \end{figure}%
 }%
\def\makeactives{
  \catcode`\"=\active
  \catcode`\;=\active
  \catcode`\:=\active
  \catcode`\'=\active
  \catcode`\~=\active
}
   \gdef\activesoff{%
      \def"{\string"}%
      \def;{\string;}%
      \def:{\string:}%
      \def'{\string'}%
      \def~{\string~}%
    }
\def\FRAME#1#2#3#4#5#6#7#8{%
 \bgroup
 \ifnum\@msidraft=\@ne
   \wasdrafttrue
 \else
   \wasdraftfalse%
 \fi
 \def\LaTeXparams{}%
 \dispkind=\z@
 \def\LaTeXparams{}%
 \doFRAMEparams{#1}%
 \ifnum\dispkind=\z@\IFRAME{#2}{#3}{#4}{#7}{#8}{#5}\else
  \ifnum\dispkind=\@ne\DFRAME{#2}{#3}{#7}{#8}{#5}\else
   \ifnum\dispkind=\tw@
    \edef\@tempa{\noexpand\FFRAME{\LaTeXparams}}%
    \@tempa{#2}{#3}{#5}{#6}{#7}{#8}%
    \fi
   \fi
  \fi
  \ifwasdraft\@msidraft=1\else\@msidraft=0\fi{}%
  \egroup
 }%
\def\TEXUX#1{"texux"}
\long\def\QQQ#1#2{%
     \long\expandafter\def\csname#1\endcsname{#2}}%
\long\def\QQA#1#2{}%
\def\QTR#1#2{{\csname#1\endcsname {#2}}}%
\def\EXPAND#1[#2]#3{}%
\def\NOEXPAND#1[#2]#3{}%
\def\LaTeXparent#1{}%
\def\ChildStyles#1{}%
\def\ChildDefaults#1{}%
\def\QTagDef#1#2#3{}%
  \providecommand{\UNICODE}[2][]{\protect\rule{.1in}{.1in}}
  \providecommand{\U}[1]{\protect\rule{.1in}{.1in}}
\def\QQfnmark#1{\footnotemark}
 \def\abstract{%
  \if@twocolumn
   \section*{Abstract (Not appropriate in this style!)}%
   \else \small 
   \begin{center}{\bf Abstract\vspace{-.5em}\vspace{\z@}}\end{center}%
   \quotation 
   \fi
  }%
   \def\registered{\relax\ifmmode{}\r@gistered
                    \else$\m@th\r@gistered$\fi}%
 \def\r@gistered{^{\ooalign
  {\hfil\raise.07ex\hbox{$\scriptstyle\rm\text{R}$}\hfil\crcr
  \mathhexbox20D}}}}{}%
\newdimen\theight
\def\newfmtname{LaTeX2e}
  \DeclareOldFontCommand{\rm}{\normalfont\rmfamily}{\mathrm}
  \DeclareOldFontCommand{\sf}{\normalfont\sffamily}{\mathsf}
  \DeclareOldFontCommand{\tt}{\normalfont\ttfamily}{\mathtt}
  \DeclareOldFontCommand{\bf}{\normalfont\bfseries}{\mathbf}
  \DeclareOldFontCommand{\it}{\normalfont\itshape}{\mathit}
  \DeclareOldFontCommand{\sl}{\normalfont\slshape}{\@nomath\sl}
  \DeclareOldFontCommand{\sc}{\normalfont\scshape}{\@nomath\sc}
\def\alpha{{\Greekmath 010B}}%
\def\beta{{\Greekmath 010C}}%
\def\gamma{{\Greekmath 010D}}%
\def\delta{{\Greekmath 010E}}%
\def\epsilon{{\Greekmath 010F}}%
\def\zeta{{\Greekmath 0110}}%
\def\eta{{\Greekmath 0111}}%
\def\theta{{\Greekmath 0112}}%
\def\iota{{\Greekmath 0113}}%
\def\kappa{{\Greekmath 0114}}%
\def\lambda{{\Greekmath 0115}}%
\def\mu{{\Greekmath 0116}}%
\def\nu{{\Greekmath 0117}}%
\def\xi{{\Greekmath 0118}}%
\def\pi{{\Greekmath 0119}}%
\def\rho{{\Greekmath 011A}}%
\def\sigma{{\Greekmath 011B}}%
\def\tau{{\Greekmath 011C}}%
\def\upsilon{{\Greekmath 011D}}%
\def\phi{{\Greekmath 011E}}%
\def\chi{{\Greekmath 011F}}%
\def\psi{{\Greekmath 0120}}%
\def\omega{{\Greekmath 0121}}%
\def\varepsilon{{\Greekmath 0122}}%
\def\vartheta{{\Greekmath 0123}}%
\def\varpi{{\Greekmath 0124}}%
\def\varrho{{\Greekmath 0125}}%
\def\varsigma{{\Greekmath 0126}}%
\def\varphi{{\Greekmath 0127}}%
\def\nabla{{\Greekmath 0272}}
\def\FindBoldGroup{%
   {\setbox0=\hbox{$\mathbf{x\global\edef\theboldgroup{\the\mathgroup}}$}}%
}
\def\Greekmath#1#2#3#4{%
    \if@compatibility
        \ifnum\mathgroup=\symbold
           \mathchoice{\mbox{\boldmath$\displaystyle\mathchar"#1#2#3#4$}}%
                      {\mbox{\boldmath$\textstyle\mathchar"#1#2#3#4$}}%
                      {\mbox{\boldmath$\scriptstyle\mathchar"#1#2#3#4$}}%
                      {\mbox{\boldmath$\scriptscriptstyle\mathchar"#1#2#3#4$}}%
        \else
           \mathchar"#1#2#3#4%
        \fi 
    \else 
        \FindBoldGroup
        \ifnum\mathgroup=\theboldgroup 
           \mathchoice{\mbox{\boldmath$\displaystyle\mathchar"#1#2#3#4$}}%
                      {\mbox{\boldmath$\textstyle\mathchar"#1#2#3#4$}}%
                      {\mbox{\boldmath$\scriptstyle\mathchar"#1#2#3#4$}}%
                      {\mbox{\boldmath$\scriptscriptstyle\mathchar"#1#2#3#4$}}%
        \else
           \mathchar"#1#2#3#4%
        \fi     	    
	  \fi}
\newif\ifGreekBold  \GreekBoldfalse
\let\SAVEPBF=\pbf
\def\pbf{\GreekBoldtrue\SAVEPBF}%
  \newcounter{equationnumber}  
  \def\mathletters{%
     \addtocounter{equation}{1}
     \edef\@currentlabel{\theequation}%
     \setcounter{equationnumber}{\c@equation}
     \setcounter{equation}{0}%
     \edef\theequation{\@currentlabel\noexpand\alph{equation}}%
  }
    \def\BibTeX{{\rm B\kern-.05em{\sc i\kern-.025em b}\kern-.08em
                 T\kern-.1667em\lower.7ex\hbox{E}\kern-.125emX}}}{}%
\def\AmS{{\protect\usefont{OMS}{cmsy}{m}{n}%
                A\kern-.1667em\lower.5ex\hbox{M}\kern-.125emS}}}{}%
\def\@@eqncr{\let\@tempa\relax
    \ifcase\@eqcnt \def\@tempa{& & &}\or \def\@tempa{& &}%
      \else \def\@tempa{&}\fi
     \@tempa
     \if@eqnsw
        \iftag@
           \@taggnum
        \else
           \@eqnnum\stepcounter{equation}%
        \fi
     \fi
     \global\tag@false
     \global\@eqnswtrue
     \global\@eqcnt\z@\cr}
\def\TCItag{\@ifnextchar*{\@TCItagstar}{\@TCItag}}
\def\@TCItag#1{%
    \global\tag@true
    \global\def\@taggnum{(#1)}}
\def\@TCItagstar*#1{%
    \global\tag@true
    \global\def\@taggnum{#1}}
\def\ExitTCILatex{\makeatother }
\let\DOTSI\relax
\def\RIfM@{\relax\ifmmode}%
\def\FN@{\futurelet\next}%
\def\iint{\DOTSI\intno@\tw@\FN@\ints@}%
\def\iiint{\DOTSI\intno@\thr@@\FN@\ints@}%
\def\iiiint{\DOTSI\intno@4 \FN@\ints@}%
\def\idotsint{\DOTSI\intno@\z@\FN@\ints@}%
\def\ints@{\findlimits@\ints@@}%
\newif\iflimtoken@
\newif\iflimits@
\def\findlimits@{\limtoken@true\ifx\next\limits\limits@true
 \else\ifx\next\nolimits\limits@false\else
 \limtoken@false\ifx\ilimits@\nolimits\limits@false\else
 \ifinner\limits@false\else\limits@true\fi\fi\fi\fi}%
\def\multint@{\int\ifnum\intno@=\z@\intdots@                          
 \else\intkern@\fi                                                    
 \ifnum\intno@>\tw@\int\intkern@\fi                                   
 \ifnum\intno@>\thr@@\int\intkern@\fi                                 
 \int}
\def\multintlimits@{\intop\ifnum\intno@=\z@\intdots@\else\intkern@\fi
 \ifnum\intno@>\tw@\intop\intkern@\fi
 \ifnum\intno@>\thr@@\intop\intkern@\fi\intop}%
\def\intic@{%
    \mathchoice{\hskip.5em}{\hskip.4em}{\hskip.4em}{\hskip.4em}}%
\def\negintic@{\mathchoice
 {\hskip-.5em}{\hskip-.4em}{\hskip-.4em}{\hskip-.4em}}%
\def\ints@@{\iflimtoken@                                              
 \def\ints@@@{\iflimits@\negintic@
   \mathop{\intic@\multintlimits@}\limits                             
  \else\multint@\nolimits\fi                                          
  \eat@}
 \else                                                                
 \def\ints@@@{\iflimits@\negintic@
  \mathop{\intic@\multintlimits@}\limits\else
  \multint@\nolimits\fi}\fi\ints@@@}%
\def\intkern@{\mathchoice{\!\!\!}{\!\!}{\!\!}{\!\!}}%
\def\plaincdots@{\mathinner{\cdotp\cdotp\cdotp}}%
\def\intdots@{\mathchoice{\plaincdots@}%
 {{\cdotp}\mkern1.5mu{\cdotp}\mkern1.5mu{\cdotp}}%
 {{\cdotp}\mkern1mu{\cdotp}\mkern1mu{\cdotp}}%
 {{\cdotp}\mkern1mu{\cdotp}\mkern1mu{\cdotp}}}%
\def\RIfM@{\relax\protect\ifmmode}
\def\text{\RIfM@\expandafter\text@\else\expandafter\mbox\fi}
\let\nfss@text\text
\def\text@#1{\mathchoice
   {\textdef@\displaystyle\f@size{#1}}%
   {\textdef@\textstyle\tf@size{\firstchoice@false #1}}%
   {\textdef@\textstyle\sf@size{\firstchoice@false #1}}%
   {\textdef@\textstyle \ssf@size{\firstchoice@false #1}}%
   \glb@settings}
\def\textdef@#1#2#3{\hbox{{%
                    \everymath{#1}%
                    \let\f@size#2\selectfont
                    #3}}}
\newif\iffirstchoice@
\def\Let@{\relax\iffalse{\fi\let\\=\cr\iffalse}\fi}%
\def\vspace@{\def\vspace##1{\crcr\noalign{\vskip##1\relax}}}%
\def\multilimits@{\bgroup\vspace@\Let@
 \baselineskip\fontdimen10 \scriptfont\tw@
 \advance\baselineskip\fontdimen12 \scriptfont\tw@
 \lineskip\thr@@\fontdimen8 \scriptfont\thr@@
 \lineskiplimit\lineskip
 \vbox\bgroup\ialign\bgroup\hfil$\m@th\scriptstyle{##}$\hfil\crcr}%
\def\Sb{_\multilimits@}%
\def\endSb{\crcr\egroup\egroup\egroup}%
\def\Sp{^\multilimits@}%
\newdimen\ex@
\def\rightarrowfill@#1{$#1\m@th\mathord-\mkern-6mu\cleaders
 \hbox{$#1\mkern-2mu\mathord-\mkern-2mu$}\hfill
 \mkern-6mu\mathord\rightarrow$}%
\def\leftarrowfill@#1{$#1\m@th\mathord\leftarrow\mkern-6mu\cleaders
 \hbox{$#1\mkern-2mu\mathord-\mkern-2mu$}\hfill\mkern-6mu\mathord-$}%
\def\leftrightarrowfill@#1{$#1\m@th\mathord\leftarrow
\mkern-6mu\cleaders
 \hbox{$#1\mkern-2mu\mathord-\mkern-2mu$}\hfill
 \mkern-6mu\mathord\rightarrow$}%
\def\overrightarrow{\mathpalette\overrightarrow@}%
\def\overrightarrow@#1#2{\vbox{\ialign{##\crcr\rightarrowfill@#1\crcr
 \noalign{\kern-\ex@\nointerlineskip}$\m@th\hfil#1#2\hfil$\crcr}}}%
\def\overleftarrow{\mathpalette\overleftarrow@}%
\def\overleftarrow@#1#2{\vbox{\ialign{##\crcr\leftarrowfill@#1\crcr
 \noalign{\kern-\ex@\nointerlineskip}$\m@th\hfil#1#2\hfil$\crcr}}}%
\def\overleftrightarrow{\mathpalette\overleftrightarrow@}%
\def\overleftrightarrow@#1#2{\vbox{\ialign{##\crcr
   \leftrightarrowfill@#1\crcr
 \noalign{\kern-\ex@\nointerlineskip}$\m@th\hfil#1#2\hfil$\crcr}}}%
\def\underrightarrow{\mathpalette\underrightarrow@}%
\def\underrightarrow@#1#2{\vtop{\ialign{##\crcr$\m@th\hfil#1#2\hfil
  $\crcr\noalign{\nointerlineskip}\rightarrowfill@#1\crcr}}}%
\def\underleftarrow{\mathpalette\underleftarrow@}%
\def\underleftarrow@#1#2{\vtop{\ialign{##\crcr$\m@th\hfil#1#2\hfil
  $\crcr\noalign{\nointerlineskip}\leftarrowfill@#1\crcr}}}%
\def\underleftrightarrow{\mathpalette\underleftrightarrow@}%
\def\underleftrightarrow@#1#2{\vtop{\ialign{##\crcr$\m@th
  \hfil#1#2\hfil$\crcr
 \noalign{\nointerlineskip}\leftrightarrowfill@#1\crcr}}}%
\def\qopnamewl@#1{\mathop{\operator@font#1}\nlimits@}
\let\nlimits@\displaylimits
\def\setboxz@h{\setbox\z@\hbox}
\def\varlim@#1#2{\mathop{\vtop{\ialign{##\crcr
 \hfil$#1\m@th\operator@font lim$\hfil\crcr
 \noalign{\nointerlineskip}#2#1\crcr
 \noalign{\nointerlineskip\kern-\ex@}\crcr}}}}
 \def\rightarrowfill@#1{\m@th\setboxz@h{$#1-$}\ht\z@\z@
  $#1\copy\z@\mkern-6mu\cleaders
  \hbox{$#1\mkern-2mu\box\z@\mkern-2mu$}\hfill
  \mkern-6mu\mathord\rightarrow$}
\def\leftarrowfill@#1{\m@th\setboxz@h{$#1-$}\ht\z@\z@
  $#1\mathord\leftarrow\mkern-6mu\cleaders
  \hbox{$#1\mkern-2mu\copy\z@\mkern-2mu$}\hfill
  \mkern-6mu\box\z@$}
\def\projlim{\qopnamewl@{proj\,lim}}
\def\injlim{\qopnamewl@{inj\,lim}}
\def\varinjlim{\mathpalette\varlim@\rightarrowfill@}
\def\varprojlim{\mathpalette\varlim@\leftarrowfill@}
\def\varliminf{\mathpalette\varliminf@{}}
\def\varliminf@#1{\mathop{\underline{\vrule\@depth.2\ex@\@width\z@
   \hbox{$#1\m@th\operator@font lim$}}}}
\def\varlimsup{\mathpalette\varlimsup@{}}
\def\varlimsup@#1{\mathop{\overline
  {\hbox{$#1\m@th\operator@font lim$}}}}
\def\align{\@verbatim \frenchspacing\@vobeyspaces \@alignverbatim
You are using the "align" environment in a style in which it is not defined.}
\let\csname endalign*\endcsname =\endtrivlist
\def\alignat{\@verbatim \frenchspacing\@vobeyspaces \@alignatverbatim
You are using the "alignat" environment in a style in which it is not defined.}
\let\csname endalignat*\endcsname =\endtrivlist
\def\xalignat{\@verbatim \frenchspacing\@vobeyspaces \@xalignatverbatim
You are using the "xalignat" environment in a style in which it is not defined.}
\let\csname endxalignat*\endcsname =\endtrivlist
\def\gather{\@verbatim \frenchspacing\@vobeyspaces \@gatherverbatim
You are using the "gather" environment in a style in which it is not defined.}
\let\csname endgather*\endcsname =\endtrivlist
\def\multiline{\@verbatim \frenchspacing\@vobeyspaces \@multilineverbatim
You are using the "multiline" environment in a style in which it is not defined.}
\let\csname endmultiline*\endcsname =\endtrivlist
\def\arrax{\@verbatim \frenchspacing\@vobeyspaces \@arraxverbatim
You are using a type of "array" construct that is only allowed in AmS-LaTeX.}
\def\tabulax{\@verbatim \frenchspacing\@vobeyspaces \@tabulaxverbatim
You are using a type of "tabular" construct that is only allowed in AmS-LaTeX.}
\let\csname endarrax*\endcsname =\endtrivlist
\let\csname endtabulax*\endcsname =\endtrivlist
 \def\endequation{%
     \ifmmode\ifinner 
      \iftag@
        \addtocounter{equation}{-1} 
        $\hfil
           \displaywidth\linewidth\@taggnum\egroup \endtrivlist
        \global\tag@false
        \global\@ignoretrue   
      \else
        $\hfil
           \displaywidth\linewidth\@eqnnum\egroup \endtrivlist
        \global\tag@false
        \global\@ignoretrue 
      \fi
     \else   
      \iftag@
        \addtocounter{equation}{-1} 
        \eqno \hbox{\@taggnum}
        \global\tag@false%
        $$\global\@ignoretrue
      \else
        \eqno \hbox{\@eqnnum}
        $$\global\@ignoretrue
      \fi
     \fi\fi
 } 
 \newif\iftag@ \tag@false
 \def\TCItag{\@ifnextchar*{\@TCItagstar}{\@TCItag}}
 \def\@TCItag#1{%
     \global\tag@true
     \global\def\@taggnum{(#1)}}
 \def\@TCItagstar*#1{%
     \global\tag@true
     \global\def\@taggnum{#1}}
     \def\tag{\@ifnextchar*{\@tagstar}{\@tag}}
     \def\@tag#1{%
         \global\tag@true
         \global\def\@taggnum{(#1)}}
     \def\@tagstar*#1{%
         \global\tag@true
         \global\def\@taggnum{#1}}
\begin{document}

\title{Rigorous computation of invariant measures and fractal dimension for
piecewise hyperbolic maps: 2D Lorenz like maps.}
\author{Stefano Galatolo \footnote{ Dipartimento di Matematica, Universita di Pisa, Via \ Buonarroti 1,Pisa. Email: galatolo@dm.unipi.it}, Isaia Nisoli \footnote{ Instituto de Matem\'{a}tica - UFRJ
 Av. Athos da Silveira Ramos 149,
Centro de Tecnologia - Bloco C
Cidade Universitária -
Ilha do Fund\~{a}o.
Caixa Postal 68530
21941-909 Rio de Janeiro - RJ - Brasil   Email:nisoli@im.ufrj.br}}

\maketitle

\begin{abstract}
We consider a class of piecewise hyperbolic maps from the unit square to
itself preserving a contracting foliation and inducing a piecewise expanding
quotient map, with infinite derivative (like the first return maps of
Lorenz like flows). We show how the physical measure of those systems can be
rigorously approximated with an explicitly given bound on the error, with
respect to the Wasserstein distance. We apply this to the rigorous
computation of the dimension of the  measure. We
present a rigorous implementation of the algorithms using interval
arithmetics, and the result of the computation on a nontrivial example of
Lorenz like map and its attractor, obtaining a statement on its local dimension.
\end{abstract}

\tableofcontents

\section{Introduction}

\paragraph{\textbf{Overview}}

Several important features of the statistical behavior of a dynamical system
are \textquotedblleft encoded\textquotedblright\ in the so called \emph{%
Physical Invariant Measure\footnote{%
Physical invariant measures are the ones which (in some sense that will be
precised below) represent the statistical behavior of a large set of initial
conditions.}}. The knowledge of the invariant measure can give information
on the statistical behavior for the long time evolution of the system. This
strongly motivates the search for algorithms which are able to compute
quantitative information on invariant measures of physical interest, and in
particular algorithms giving an explicit bound on the error which is made in
the approximation.

The problem of approximating the invariant measure of dynamical systems was
broadly studied in the literature. Some algorithm is proved to converge to
the real invariant measure in some classes of systems (up to errors in some
given metrics), but results giving an explicit (rigorous) bound on the error
are relatively few, and really working implementations, even fewer (e.g. 
\cite{BB,GN,H,KMY,L}). Almost every (rigorous) implementation and almost all
methods works in the case of one dimensional or expanding maps.

The case where contracting directions are present does not easily fit with
known techniques, based on the choiche of a suitable functional analytic
framework and on the related spectral properties of the transfer operator
(or on Hilbert cones), because the involved functional spaces and the needed
a priori estimations are not easy to be brought in the form which is
necessary for an effective implementation. 

The output of a computation with an explicit estimation for
the error can be seen as a rigorously (computer aided) proved statement, and
hence has a mathematical meaning. In our case, the rigorous approximation
for invariant measures gives us the possibility to have rigorous
quantitative estimations on some aspects of the statistical and geometrical
behavior of the system we are interested in. In particular we will use it to have a statement on the
dimension of its physical invariant measure.

About the general problem of computing invariant measures, it is worth to
remark that some negative result are known. In \cite{GalHoyRoj3} it is shown
that\emph{\ there are examples of computable\footnote{%
Computable, here means that the dynamics can be approximated at any accuracy
by an algorithm, see e.g. \cite{GalHoyRoj3} for precise definition.} systems
without any computable invariant measure}. This shows some subtlety in the
general problem of computing the invariant measure up to a given error.

In this paper we focus on a class of Lorenz like maps, which are piecewise
hyperbolic maps with unbounded derivatives preserving a \emph{contracting
foliation}, similar to the Poincar\'{e} map of the famous Lorenz system.

We consider maps $F$ acting on $Q=I\times I$ \ (where $I=[-\frac{1}{2},\frac{%
1}{2}]$) having the following properties:

\begin{description}
\item[1)] $F:\Sigma \rightarrow \Sigma $ \ is of the form $%
F(x,y)=(T(x),G(x,y))$ (preserves the natural vertical foliation of the
square) and:

\item[2)] $T:I\rightarrow I$ is\ onto and piecewise monotonic, with $N,$
increasing, expanding branches with possibly infinite derivative: there are $%
c_{i}\in \lbrack 0,1]$ for $0\leq i\leq N$ with $0=c_{0}<\cdots <c_{N}=1$
such that $T|_{(c_{i},c_{i+1})}$ is continuous and monotone for $0\leq i<N$.
Furthermore, for $0\leq i<N$, $T|_{(c_{i},c_{i+1})}$ is $C^{1}$ and $\inf_{x\in
P}|T^{\prime }(x)|>1$.

\item[3)] $F$  is
uniformly contracting on each vertical leaf $\gamma $: there is a $\lambda <1$ such that
$|G(x,y_{1})-G(x,y_{2})|\leq \lambda \cdot |y_{1}-y_{2}|$;

\item[4)] \label{it:item_2} 
 $G:Q\rightarrow (0,1)$ is $C^{1}$ on $P\times
\lbrack 0,1]$, where $P=[0,1]\setminus \cup _{0\leq i<N}c_{i}$. 
 Furthermore, $\sup |\partial G/\partial x|<\infty $ and $|(\partial G/\partial y)(x,y)|>0$ for $%
(x,y)\in P\times \lbrack 0,1]$; 

\item[5)] $\frac{1}{|T^{\prime }|}$ has bounded variation.
\end{description}

About the regularity of $T$: we suppose that $\frac{1}{|T^{\prime }|}$ has
bounded variation to simplify the computation of the invariant measure of
this induced map. We remark that in general, for Lorenz like systems this
assumption should be replaced by generalized bounded variation (see \cite%
{AGP,keller}). This kind of maps however still satisfy a Lasota-Yorke
inequality, and the general strategy for the computation of the invariant
measure should be similar to the one used here and explained in Section \ref%
{1d} for the bounded variation case.

We approach the computation of the invariant measure for the two dimensional map by some techniques which have been
succesfully used to estimate decay of correlations in systems preserving a
contracting foliation (see \cite{AGP,GP10}). 
In these systems, the physical invariant measure can be seen as the limit of
iterates of a suitable absolutely continuous initial measure.
Our strategy, in order to compute this measure with an explicit bound on the
error, is to iterate a suitable initial measure a sufficient number of times
and carefully estimate the speed with which it approaches to the limit. This is
not sufficient for the computation since there is a further technical
problem: \emph{the computer cannot perfectly simulate a real iteration}.
Thus we need to understand how far simulated iterates are from real iterates.

Hence the algorithm and the estimation of the error involve two main steps:

\begin{description}
\item[a)] we estimate how many iterates of a suitable starting measure%
\footnote{%
The suitable measure to be iterated is constructed starting from a $L^{1}$
approximation of the absolutely continuous invariant measure of the induced
map $T$.} in the real system are necessary to approach the invariant measure
at a given distance (see Theorem \ref{uno}), and then

\item[b)] we estimate the distance between real iterates and the iterates of
a suitable discretized model which can be implemented on a computer (see
Proposition \ref{2}).
\end{description}

Altogether this allows  to implement an algorithm which rigorously
approximates the invariant measure by a suitable discretization of the
system (in the paper we will consider the so called Ulam discretization
method, which approximate the system by a Markov chain).

The results and the implementations which are presented are meant as a proof
of concept, to solve the problem and run experiments in some nontrivial and
interesting class of examples. We expect that a very similar strategy apply
in many other cases of systems preserving a contracting foliation.

In the next sections we describe more precisely the problem and the
technical tools we use to approach it: in section \ref{sec2} we introduce
some basic tools which are used in our construction.

In section \ref{lagi} and \ref{approx} we show the general mathematical
estimates which allows to implement the above two main steps a), b).

We then describe informally the algorithm which is meant to be implemented,
and then in Section \ref{dim} we show how, by the approximated knowledge of
the invariant measure and of the geometry of the system, it is possible to
approximate its local dimension.

In section \ref{8} we describe the implementation of the algorithm and some
remarks which permitted us to optimize it.

The rigorous implementation of our algorithm is substantially made by
interval arithmetics. It presents several technical issues; as an example we
mention that since the map is two-dimensional the number of cells involved
in the discretization increases, a priori, as the square of the size of the
discretization. This seriously affect the speed of the computation and the
possibility to reach a good level of precision. The presence of the
contracting direction, and an attractor which is not two dimensional allows
to find a suitable reduction of the discretization (restricting computations
to a neighborhood of the attractor) which reduces the complexity of the
problem (see Section \ref{subsec:rid}).

In Section \ref{meas2d} we show the result of the computation of the
invariant measure on an example of two dimensional Lorenz like map.

The computation of the invariant measure also allows the rigorous
approximation of the local dimension of the measure we are interested in. In
Section \ref{dimension} we show the result of the computation of the
dimension of a non trivial example.

\section{The general framework\label{sec2}}

In the next subsections we explain some preliminary notions and results used
in the paper.

\paragraph{The transfer operator}

Let us consider the space $SM(X)$ of Borel measures with sign on $X.$ A
function $T$ between metric spaces naturally induces a linear function $%
L_{T}:SM(X)\rightarrow SM(X)$ called the \textbf{transfer operator}
(associated to $T$) which is defined as follows. If $\mu \in SM(X)$ then $%
L_{T}[\mu ]\in SM(X)$ is the measure such that

\begin{equation*}
L_{T}[\mu ](A)=\mu (T^{-1}(A)).
\end{equation*}%
Sometimes, when no confusion arises, we will denote $L_{T}$ more simply by $%
L $.

Measures which are invariant for $T$ are fixed points of $L$, hence the
computation of invariant measures very often is done by computing some fixed
points of this operator. The most applied and studied strategy is to find a
finite dimensional approximation for $L$ (restricted to a suitable function
space) reducing the problem to the computation of the corresponding relevant
eigenvectors of a finite matrix. In this case some quantitative stability
result may ensure that the fixed point of the approximated operator is near
to the real fixed point which was meant to be computed (see Section \ref{1d}
for one example).

On the other hand, in many other interesting cases the invariant measure can
be computed as the limit of the iterates of some suitable starting measure $%
\overline{\mu} =\lim_{n\rightarrow \infty }L^{n}(\mu _{0})$. To estimate the error of
the approximation is important to estimate the speed of convergence (in some
topology). Another strategy is then to iterate the finite dimensional
approximating operator a suitable number of times to ``follow'' the
iterations of the original operator which will converge to the fixed point.

In this paper we consider a class of maps preserving a contracting foliation.
For this kind of maps it is possible to compute the speed of convergence of
suitable measures to the invariant one (see Section \ref{lagi}) and this is
the main idea we apply to compute the invariant measure. A suitable starting
measure has however to be computed. This is done by observing that this kind
of maps induces a one dimensional map representing the dynamics between the
leaves. We approximate the physical invariant measure for this map (up to
small errors in $L^{1}$, this will be done by a suitable fixed point
stability result, see Section \ref{1d}) then we use this approximation to
construct a suitable invariant measure to be iterated.

\paragraph{The Ulam method.}

We now describe a finite dimensional approximation of $L$ which is useful to
approximate invariant measures in the $L^{1}$ norm (see e.g. \cite%
{DelJu02,Din94,F08,GN,L,KMY}), and as we will see it also works with the
Wasserstein distance in our case.

Let us suppose now that $X$ is a manifold with boundary. Let us describe 
\emph{Ulam's Discretization} method. In this method the space $X$ is
discretized by a partition $I_{\delta }$ (with $k$ elements) and the system
is approximated by a finite state Markov Chain with transition probabilities 
\begin{equation}
P_{ij}={m(T}^{-1}{(I_{j})\cap I_{i})}/{m(I_{i})}  \label{pij}
\end{equation}%
(where $m$ is the normalized Lebesgue measure on $X$) and defining a
corresponding finite-dimensional operator $L_{\delta }$ ($L_{\delta }$
depend on the whole chosen partition but simplifying we will indicate it
with a parameter $\delta $ related to the size of the elements of the
partition) we remark that in this way, to $L_{\delta }$ it corresponds a
matrix $P_{k}=(P_{ij})$ .

Alternatively $L_{\delta }$ can be seen in the following way: let $F_{\delta
}$ be the $\sigma -$algebra associated to the partition $I_{\delta }$, then:%
\begin{equation}
L_{\delta }(f)=\mathbf{E}(L(\mathbf{E}(f|F_{\delta }))|F_{\delta })
\label{000}
\end{equation}%
where $\mathbf{E}$ is the conditional expectation. Taking finer and finer
partitions, in certain systems including for example piecewise expanding
one-dimensional maps, the finite dimensional model converges to the real one
and its natural invariant measure to the physical measure of the original
system.

We use the Ulam discretization both when applying the fixed point stability
result to compute the one dimensional invariant measure necessary to start
the iteration, and when constructing an approximated operator to iterate the two dimensional
starting measure.

\paragraph{The Wasserstein distance\label{sec:W-Kdistance}}

We are going to approximate the interesting invariant measure of our Lorenz
like map up to small errors in the Wasserstein metric.

If $X$ is a metric space, we denote by $SM(X)$ the set of Borel finite
measures with sign on $X$. Let $g:X\to\mathbb{R}$; let 
\begin{equation*}
L(g):=\sup_{x,y}\frac{|g(x)-g(y)|}{|x-y|}
\end{equation*}
be the best Lipschitz constant of $g$ and set $\Vert g\Vert _{\text{Lip}%
}=\Vert g\Vert _{\infty }+L(g).$

Let us consider the following slight modification of the classical notion of
Wasserstein distance between probability measures: given two measures $\mu
_{1}$ and $\mu _{2}$ on $X$, we define their  distance as

\begin{equation*}
W(\mu _{1},\mu _{2})=\underset{g~s.t.~L(g)\leq 1,||g||_{\infty }\leq 1}{\sup 
}|\int_{X}g~d\mu _{1}-\int_{X}g~d\mu _{2}|.
\end{equation*}

When $\mu _{1}$ and $\mu _{2}$ are probability measures, this is equavalent
to the classical notion.

Let us denote by $||.||$ the norm relative to this notion of distance%
\begin{equation*}
||\mu ||=\sup_{\substack{ \phi \in 1-\text{Lip}(I)  \\ ||\phi ||_{\infty
}\leq 1}}|\int \phi ~d\mu |.
\end{equation*}

\begin{remark}
By definition it follows that if $\mu =\sum_{1}^{m}\mu _{i}$, $\nu
=\sum_{1}^{m}\nu _{i}$ 
\begin{equation}
W(\mu ,\nu )\leq \sum_{1}^{m}W(\mu _{i},\nu _{i}).
\end{equation}%
Moreover, (see \cite{GP10}) if $\mu $ and $\nu $ are probability measures,
and $F$ is a $\lambda $-contraction ($\lambda <1$), then%
\begin{equation*}
W(L_{F}(\mu ),L_{F}(\nu ))\leq \lambda \cdot W(\mu ,\nu ).
\end{equation*}
\end{remark}

\section{Systems with contracting fibers, disintegration and effective
estimation for the speed of convergence to equilibrium.}

\label{lagi}

As explained before, we want to estimate how many iterations are needed for
a suitable starting measure supported on a neighborhood of the attractor, to
approach the invariant measure. This kind of estimation is similar to a
decay of correlation one, and we use an approach similar to the one used in 
\cite{AGP} to prove exponential decay of correlation for a class of systems
with contracting fibers. Here a more explicit and sharper estimate is needed.

Let us introduce some notations: we will consider the $\sup $ distance on
the square $Q=[-\frac{1}{2},\frac{1}{2}]^{2}$, so that the diameter, $\text{%
Diam}(Q)=1$. This choice is not essential, but will avoid the presence of
some multiplicative constants in the following, making notations cleaner.

The square $Q$ will be foliated by stable, vertical leaves. We will denote
the leaf with $x$ coordinate by $\gamma _{x}$ or, with a small abuse of
notation when no confusion is possible, we will denote both the leaf and its
coordinate with $\gamma $.

Given a measure $\mu $ and a function $f$, let $f\mu $ be the measure $\mu
_{1}$ such that $d\mu _{1}=fd\mu $. Let $\mu $ be a measure on $Q$. In the
following, such measures on $Q$ will be often disintegrated in the following
way: for each Borel set $A$%
\begin{equation}
\mu (A)=\int_{\gamma \in I}\mu _{\gamma }(A\cap \gamma )d\mu _{x}
\label{dis}
\end{equation}%
with $\mu _{\gamma }$ being probability measures on the leaves $\gamma $ and 
$\mu _{x}$ is the marginal on the $x$ axis which will be an absolutely
continuous measure.

Let us consider a Lorenz like two dimensional map $F$ and estimate
explicitly the speed of convergence of iterates of two initial measures with
absolutely continuous marginal.

\begin{theorem}
\label{uno}Let $F:Q \rightarrow Q $ as above, let $\mu ,\nu \in PM(\Sigma )$
be two measures with absolutely continuous marginals $\mu _{x},\nu _{x}$.
Then%
\begin{equation*}
W(L_{F}^{n}(\mu ),L_{F}^{n}(\nu ))\leq \lambda ^{n}+||\mu _{x}-\nu
_{x}||_{L^1}.
\end{equation*}%
Where we recall that $\lambda $ is the contraction rate on the vertical
leaves.
\end{theorem}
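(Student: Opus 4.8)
The statement bounds the Wasserstein distance between iterates of two measures on $Q$ in terms of the contraction rate $\lambda$ on vertical leaves and the $L^1$ distance of the marginals. The natural strategy is to split the problem into the "vertical" (contracting) and "horizontal" (base dynamics) contributions, using the disintegration \eqref{dis} along the vertical foliation.

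Let me sketch the proof. I'll start with the warm-up case $\mu_x = \nu_x$, i.e. the two measures have the same marginal. Then I can couple them leaf-by-leaf: write $\mu = \int_I \mu_\gamma \, d\mu_x$ and $\nu = \int_I \nu_\gamma \, d\mu_x$, so that $\mu$ and $\nu$ are both "fibered" over the same base measure. Since $F$ preserves the foliation and acts on each leaf as a $\lambda$-contraction (property 3), applying $L_F^n$ pushes the leaf $\gamma$ to the leaf $T^n\gamma$ and contracts distances within it by $\lambda^n$. So for each fixed $\gamma$, $W(L_F^n(\mu_\gamma), L_F^n(\nu_\gamma)) \leq \lambda^n$ (using $\Diam(Q)=1$ and the contraction property of $W$ recorded in the Remark). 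Integrating this leaf-wise bound against $\mu_x$ and using the subadditivity of $W$ from the Remark gives $W(L_F^n(\mu), L_F^n(\nu)) \leq \lambda^n$. This is the first half.

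For the general case, I would interpolate through an auxiliary measure: let $\tilde\nu$ be the measure with the same conditional measures as $\mu$ along each leaf but with marginal $\nu_x$ — concretely $\tilde\nu = \int_I \mu_\gamma \, d\nu_x$. Then by the triangle inequality
\[
W(L_F^n(\mu), L_F^n(\nu)) \leq W(L_F^n(\mu), L_F^n(\tilde\nu)) + W(L_F^n(\tilde\nu), L_F^n(\nu)).
\]
The second term is handled by the first half of the argument, since $\tilde\nu$ and $\nu$ have the same marginal $\nu_x$; this contributes $\lambda^n$. For the first term, $\mu$ and $\tilde\nu$ share the same conditional structure and differ only in the base measure, so I would bound $W(L_F^n(\mu), L_F^n(\tilde\nu))$ by the total variation (equivalently $L^1$) distance of the marginals: since the test functions $g$ in the definition of $W$ satisfy $\|g\|_\infty \leq 1$, integrating out the leaves shows $|\int g \, d(L_F^n\mu) - \int g \, d(L_F^n\tilde\nu)|$ is controlled by $\|L_T^n(\mu_x - \nu_x)\|_{L^1}$, which by the contraction (non-expansion) of the transfer operator on $L^1$ is $\leq \|\mu_x - \nu_x\|_{L^1}$. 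Summing the two contributions yields the claimed bound $\lambda^n + \|\mu_x - \nu_x\|_{L^1}$.

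The main obstacle is the bookkeeping in the general case: one must check carefully that pushing forward by $L_F^n$ interacts correctly with the disintegration — that is, that $L_F^n$ of a measure fibered over $\mu_x$ with conditionals $\mu_\gamma$ is again fibered, over $L_T^n \mu_x$, with conditionals that are the pushforwards of the $\mu_\gamma$ under the fiber maps $G$ iterated. Property 3 (uniform fiber contraction) and the fact that $F(x,y)=(T(x),G(x,y))$ is a skew product over $T$ make this true, but the argument needs to be stated cleanly to justify both the leaf-wise contraction estimate and the reduction of the marginal-mismatch term to an $L^1$ bound on the base. The measurability of the disintegration and the absolute continuity of the marginals (preserved because $1/|T'|$ has bounded variation, property 5) are what make this rigorous rather than merely formal.
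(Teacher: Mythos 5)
Your proof is correct in substance but follows a genuinely different route from the paper's. The paper decomposes $\mu$ and $\nu$ over the injectivity branches $\{I_i\}$ of $T^n$, so that on each piece the preimage of a vertical leaf is a single leaf; it then applies the disintegration estimate of Proposition \ref{prod} to each piece, uses the fiberwise $\lambda^n$-contraction, and changes variables back before summing over branches. You instead interpolate through the auxiliary measure $\tilde\nu=\int_I \mu_\gamma\, d\nu_x$: the pair $(\tilde\nu,\nu)$ is fibered over the common base measure $\nu_x$, so the leafwise coupling plus the fiber contraction gives the $\lambda^n$ term, while the pair $(\mu,\tilde\nu)$ differs only in the base measure and is handled using only the constraint $\|g\|_\infty\le 1$. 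This avoids both the branch decomposition and Proposition \ref{prod} and is arguably more elementary; what the paper's branch decomposition buys is precisely that $T^n$ restricted to each $I_i$ is injective, so that the pushforward of a fibered measure is again fibered with pushforward conditionals, which is what makes their leafwise bookkeeping after $n$ iterations literally true piece by piece.

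One step of yours should be restated. The claim that $|\int g\, d(L_F^n\mu)-\int g\, d(L_F^n\tilde\nu)|$ is controlled by $\|L_T^n(\mu_x-\nu_x)\|_{L^1}$ is not correct as written: $L_F^n\mu$ and $L_F^n\tilde\nu$ do not share conditionals on the image leaves, because the conditional over a leaf $\gamma'$ mixes the pushforwards $F^n_*\mu_{\gamma_j}$ over all preimage leaves $\gamma_j\in T^{-n}(\gamma')$ with weights depending on the respective marginals; for maps in this class one can even have $L_T^n\mu_x=L_T^n\nu_x$ while $L_F^n\mu$ and $L_F^n\tilde\nu$ are at distance of order one. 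The bound you need follows instead by estimating \emph{before} pushing forward: since $\mu-\tilde\nu=\int_I\mu_\gamma\, d(\mu_x-\nu_x)$ and $|g\circ F^n|\le 1$, one gets $|\int g\, d\bigl(L_F^n(\mu-\tilde\nu)\bigr)|=|\int_I\bigl(\int g\circ F^n\, d\mu_\gamma\bigr)\, d(\mu_x-\nu_x)|\le \|\mu_x-\nu_x\|_{L^1}$, which is exactly the term in the statement. With that rewording (and the convention of Remark \ref{rem:well_defined} to define $\mu_\gamma$ on leaves where the density of $\mu_x$ vanishes, so that $\tilde\nu$ is well defined), your argument is complete.
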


In the proof we use the following, proposition (see \cite{AGP}, Proposition
3) which allows to estimate the Wasserstein distance of two measures by its
disintegration on stable leaves.

\begin{proposition}
\label{prod}Let $\mu ^{1}$, $\mu ^{2}$ be measures on $Q$ as above, such
that for each Borel set $A$ 
\begin{equation*}
\mu ^{1}(A)=\int_{\gamma \in I}\mu _{\gamma }^{1}(A\cap \gamma )d\mu
_{x}^{1}~and~\mu ^{2}(A)=\int_{\gamma \in I}\mu _{\gamma }^{2}(A\cap \gamma
)d\mu _{x}^{2},
\end{equation*}%
where $\mu _{x}^{i}$ is absolutely continuous with respect to the Lebesgue
measure. In addition, let us assume that

\begin{enumerate}
\item $\int_{I}W(\mu _{\gamma }^{1},\mu _{\gamma }^{2})d\mu _{x}^{1}\leq
\epsilon $\label{it:well_defined}

\item $V(\mu _{x}^{1},\mu _{x}^{2})\leq \delta $ (where $V(\mu _{x}^{1},\mu
_{x}^{2})=\sup_{|g|_{\infty }\leq 1}|\int gd\mu _{x}^{1}-\int gd\mu
_{x}^{2}| $ is the \textbf{total variation distance}).
\end{enumerate}

Then $|\int gd\mu ^{1}-\int gd\mu ^{2}|\leq ||g||_{\text{Lip}}\cdot(\epsilon
+\delta ).$
\end{proposition}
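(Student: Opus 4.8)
The plan is to bound $|\int g\,d\mu^1-\int g\,d\mu^2|$ for an arbitrary Lipschitz test function $g$ on $Q$ (if $||g||_{\text{Lip}}=\infty$ there is nothing to prove), by rewriting each integral over $Q$ as an iterated integral over the leaves using the disintegration hypothesis, and then splitting the resulting difference into a fiberwise part, controlled by assumption (1), and a part living on the $x$-axis, controlled by assumption (2).

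First I would introduce, for $i=1,2$, the function $G_i:I\to\mathbb{R}$ given by $G_i(\gamma)=\int_\gamma g\,d\mu^i_\gamma$. Since each $\mu^i_\gamma$ is a probability measure on the leaf $\gamma$, $G_i$ is bounded with $||G_i||_\infty\leq||g||_\infty$, and (by the disintegration formula) $\int_Q g\,d\mu^i=\int_I G_i(\gamma)\,d\mu^i_x$. This gives the decomposition
\begin{equation*}
\int_Q g\,d\mu^1-\int_Q g\,d\mu^2=\int_I\big(G_1-G_2\big)\,d\mu^1_x+\Big(\int_I G_2\,d\mu^1_x-\int_I G_2\,d\mu^2_x\Big).
\end{equation*}

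For the first term, I would fix a leaf $\gamma$ and note that the restriction $g|_\gamma$ satisfies $L(g|_\gamma)\leq L(g)$ and $||g|_\gamma||_\infty\leq||g||_\infty$, so after dividing by $\max(L(g),||g||_\infty)\leq||g||_{\text{Lip}}$ it becomes an admissible test function in the definition of $W$; hence $|G_1(\gamma)-G_2(\gamma)|\leq||g||_{\text{Lip}}\,W(\mu^1_\gamma,\mu^2_\gamma)$, and integrating against the positive measure $\mu^1_x$ and invoking assumption (1) bounds this term by $||g||_{\text{Lip}}\,\epsilon$. For the second term, $G_2$ is a bounded measurable function on $I$ with $||G_2||_\infty\leq||g||_\infty\leq||g||_{\text{Lip}}$, so the definition of the total variation distance $V$ together with assumption (2) bounds it by $||G_2||_\infty\,V(\mu^1_x,\mu^2_x)\leq||g||_{\text{Lip}}\,\delta$. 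Summing the two estimates yields $|\int g\,d\mu^1-\int g\,d\mu^2|\leq||g||_{\text{Lip}}(\epsilon+\delta)$.

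The only step that needs genuine care — and the place where I would be most careful — is the measure-theoretic bookkeeping attached to the disintegration: measurability of $\gamma\mapsto\int_\gamma g\,d\mu^i_\gamma$ and the Fubini-type identity $\int_Q g\,d\mu^i=\int_I\big(\int_\gamma g\,d\mu^i_\gamma\big)\,d\mu^i_x$. Both follow from the existence of the disintegration by the usual simple-function / monotone-class approximation, using that the $\mu^i_\gamma$ are probability measures (which is part of the hypotheses). Beyond that the argument is only the triangle inequality and two one-line duality estimates, so I do not anticipate any further difficulty.
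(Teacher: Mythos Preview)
Your argument is correct and is exactly the natural one: split via the intermediate term $\int_I G_2\,d\mu^1_x$, bound the fiberwise piece using the definition of $W$ (after rescaling $g|_\gamma$ by $||g||_{\text{Lip}}$), and bound the base piece using $||G_2||_\infty\leq||g||_\infty$ and the definition of $V$. The paper does not give its own proof of this proposition; it is quoted from \cite{AGP} (Proposition~3), where the proof is precisely the decomposition you wrote down. The only point worth flagging, which the paper addresses in Remark~\ref{rem:well_defined} and which you implicitly rely on, is that $G_2(\gamma)$ must be defined $\mu^1_x$-a.e.\ (not just $\mu^2_x$-a.e.); this is handled by the convention that $\mu^2_\gamma$ is taken to be Lebesgue on leaves where the density of $\mu^2_x$ vanishes.
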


\begin{remark}
\label{rem:well_defined} Referring to Item \ref{it:well_defined} we garantee
the left hand side to be well defined by assuming (without changing $\mu
^{2} $) that $\mu _{\gamma }^{2}$ is defined in some way, for example $\mu
_{\gamma }^{2}=m$ (the one dimensional Lebesgue measure on the leaf)\ for
each leaf where the density of $\mu _{x}^{2}$ is null.
\end{remark}

\begin{proof}[Proof of Theorem \protect\ref{uno}]
Let us consider $\{I_{i}\}_{i=1,...,m}$ the intervals where the branches of $%
T^{n}$ are defined. Let us consider $\varphi _{i}=1_{I_{i}\times I}$ and let 
\begin{equation*}
\mu_{i}=\varphi _{i}\mu \quad \nu _{i}=\varphi _{i}\nu,
\end{equation*}
then $\mu =\sum \mu _{i}$, $\nu =\sum \nu _{i}$; thus by triangle inequality%
\begin{equation*}
W(L_{F}^{n}(\mu ),L_{F}^{n}(\nu ))\leq \sum_{i=1,..,m}W(L_{F}^{n}(\mu
_{i}),L_{F}^{n}(\nu _{i})).
\end{equation*}

Let us denote by $T_{i}:=T^{n}|_{I_{i}}$, remark that this is injective and
recall that $T^{n}$ is a $L^{1}$ contraction. Then by Proposition \ref{prod} 
\begin{eqnarray*}
W(L_{F}^{n}(\mu _{i}),L_{F}^{n}(\nu _{i})) &\leq &\int_{I}W((L^{n}\mu
_{i})_{\gamma },(L^{n}\nu _{i})_{\gamma })~dL_{T^{n}}((\nu
_{i})_{x})+||L_{T}^{n}((\mu _{i})_{x})-L_{T}^{n}((\nu _{i})_{x})||_{L^1} \\
&\leq &\int_{I}W(L_{F}^{n}((\mu _{i})_{T_{i}^{-1}(\gamma )}),L_{F}^{n}((\nu
_{i})_{T_{i}^{-1}(\gamma )}))~dL_{T^{n}}((\nu _{i})_{x})+||(\mu
_{i})_{x}-(\nu _{i})_{x}||_{L^1} \\
&\leq &\lambda ^{n}\int_{I}W((\mu _{i})_{T_{i}^{-1}(\gamma )},(\nu
_{i})_{T_{i}^{-1}(\gamma )})~dL_{T^{n}}((\nu _{i})_{x})+||(\mu
_{i})_{x}-(\nu _{i})_{x}||_{L^1} \\
&=&\lambda ^{n}\int_{I_{i}}W((\mu _{i})_{\gamma },(\nu _{i})_{\gamma
})~d((\nu _{i})_{x})+||(\mu _{i})_{x}-(\nu _{i})_{x}||_{L^1}.
\end{eqnarray*}

Summarizing: 
\begin{eqnarray*}
W(L_{F}^{n}(\mu ),L_{F}^{n}(\nu )) &\leq &\lambda
^{n}\sum_{i}\int_{I_{i}}W((\mu _{i})_{\gamma },(\nu _{i})_{\gamma })~d((\nu
_{i})_{x})+||(\mu _{i})_{x}-(\nu _{i})_{x}||_{L^1} \\
&=&\lambda ^{n}\int_{I}W(\mu _{\gamma },\nu _{\gamma })~d(\nu _{x})+||\mu
_{x}-\nu _{x}|||_{L^1}.
\end{eqnarray*}
\end{proof}

The two dimensional map $F$ induces a one dimensional one $T$ which is
piecewise expanding. In this kind of maps the application of the Ulam method
with cells of size $\delta $, gives a way to approximate the absolutely
continuous invariant measure $f$ of $T$ by a step function $f_{\delta }$,
which is the steady state of the associated Markov chain (see Section \ref%
{1d} for the details). We will use $f_{\delta }$ to construct a suitable
starting measure for the iteration process. Denoting by $\overline{\mu }$
the physical invariant measure of $F$, we will consider the above iteration
process by iterating $\overline{\mu }$ and a starting measure $\mu _{0}$
supported on a suitable open neighborhood $U$ of the attractor and having $%
f_{\delta }$ as marginal on the expanding direction.

\section{Approximated 2D iterations}

\label{approx}

The general idea is to approach the invariant measure by iterating a
suitable measure. We remark that we cannot simulate real iterates on a
finite computer, and we can only work with approximated iterates. We will
then estimate the distance between these and real ones.

Let us consider the grid partition $\mathcal{Q}=\{Q_{i,j}\}$ of $Q$,
dividing $Q$ in rectangles $Q_{i,j}$ of size $\delta \times \delta
^{^{\prime }}$ (where $\delta ,\delta ^{^{\prime }}$ are the inverses of two
integers). Let $I_{i}$ be the relative subdivision of $[0,1]$ in $\delta
^{\prime }$ long segments. We also denote $I_{-1}=I_{\frac{1}{\delta
^{\prime }}+1}=\emptyset $. Let $\pi _{1}$ and $\pi _{2}$ be the two,
natural projections of $Q$, respectively along the vertical and horizontal
direction.

Let $Q_{i}=\cup _{j}Q_{i,j}$ be the horizontal, row strips and let $%
Q_{j}^{t}=\cup _{i}Q_{i,j}$ be the vertical ones. Moreover let $\varphi
_{i}=1_{Q_{i}}$ be the indicator function of $Q_{i}$; these are $\frac{1}{%
\delta ^{\prime }}$ many functions as shown in figure \ref{fig:quadrati}.

\begin{figure}[h]
\centering
\includegraphics[width=60mm]{./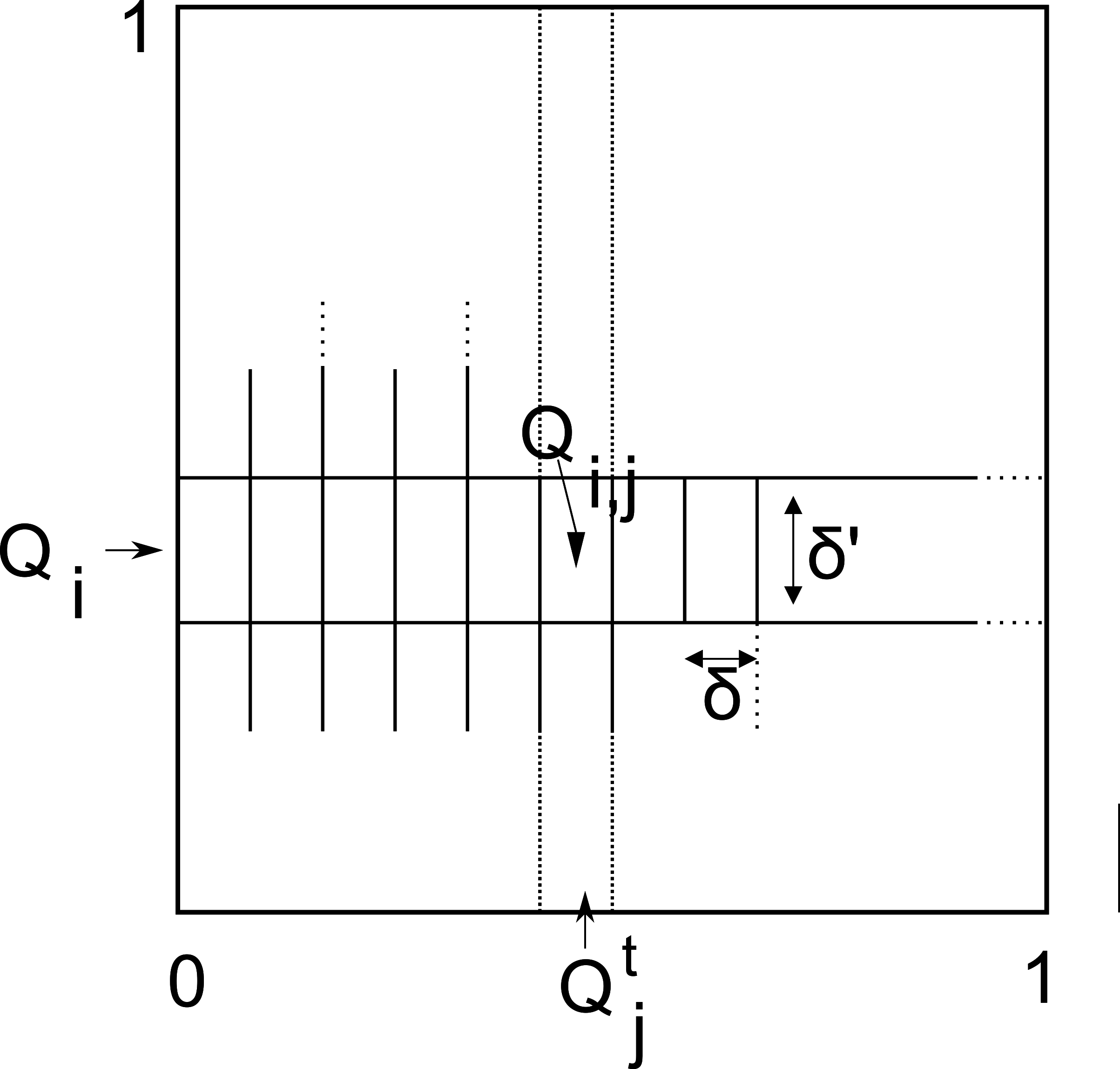} 
\caption{The partitioning scheme and the notation}
\label{fig:quadrati}
\end{figure}


Let us denote by $\varphi _{i}m$, the measure having density $\varphi _{i}$
(with respect to the Lebesgue one).

We will also need to perform some construction on measures. Let us introduce
some notations which will be helpful.

Given a measure $\mu $, let us now consider two projection operators
averaging on vertical or horizontal segments, $P_{\_}$ and $%
P_{|}:PM(Q)\rightarrow PM(Q)$ defined by

\begin{equation*}
P_{|}\mu =\sum_{i}\pi _{1}^{\ast }(\varphi _{i}\mu )\times \pi _{2}^{\ast
}(\varphi _{i}m)
\end{equation*}%
and $P_{\_}\mu $ the measure obtained similarly, averaging on horizontal
segments in $Q_{i,j}$: 
\begin{equation*}
P_{\_}\mu =\sum_{i,j}\pi _{1}^{\ast }(1_{Q_{i,j}}m)\times \pi _{2}^{\ast
}(1_{Q_{i,j}}\mu )
\end{equation*}%
so that $P_{\_}P_{|}\mu =\sum_{i,j}\mu (Q_{i,j})1_{Q_{i,j}}m=\mathbf{E}(\mu
|\{Q_{i,J}\})$ and $\pi _{1}(P_{\_}P_{|}\mu )=\mathbf{E}(\pi _{1}(\mu
)|\{I_{i}\})$.

Let us define 
\begin{equation*}
L_{\delta }=P_{-}P_{|}LP_{-}P_{|}.
\end{equation*}%
This is a finite rank operator and is the Ulam discretization of $L$ with
respect to the rectangle partition.

We remark that $L$ is not a contraction on the $W$ distance, to realize it,
consider a pair of Dirac-$\delta $-measures on the expanding direction. This
is a problem, in principle, when simulating real iterations of the system by
approximate ones. The problem can be overcome disintegrating the measure
along the stable leaves and \emph{exploiting the fact that the measures we
are interested in, are absolutely continuous on the expanding direction and
the system, in some sense, will be stable for this kind of measures}. This
can be already noticed in Theorem \ref{uno} where it can be seen that the
(total variation) distance between the marginals does not increase by
iterating the transfer operator.

Now let us define the  measure $\mu _{0}$ \ which is meant to be
iterated and estimate $W(L^{n}\mu _{0},L_{\delta }^{n}\mu _{0})$. Let us
consider the physical invariant measure of the system $\overline{\mu }$ and
iterate a starting measure $\mu _{0}$ supported on a suitable open
neighborhood $U$ of the attractor\footnote{%
This neighborhood will be constructed in the implementation by intersecting
\ $F^{n}(Q)$ with a suitable grid and taking all the rectangles with non
empty intersection.}. We suppose that $U$ \ is such that \ $U\cap I_{\gamma
} $ is a finite union of open intervals,\ where $I_{\gamma }=\{(x,y)\in
\lbrack 0,1]\times \lbrack 0,1],x=\gamma \}$ is a vertical leaf at
coordinate $\gamma $. Given $U$, we construct $\mu _{0}$ in a way that it
has the computed approximation $f_{\delta }$ of the one dimensional invariant mesure, as marginal on the expanding direction. We also construct
the measure $\mu _{0}$ in a way that there is on each stable leaf, a
multiple of the Lebesgue measure $m_{U_{\gamma }}$ on the union of intervals 
$U_{\gamma }=$\ $U\cap I_{\gamma }$.

More precisely

\begin{equation}  \label{eq:mu_0}
\mu _{0}=\ f_{\delta }\times \frac{m_{U_{\gamma }}}{m_{U_{\gamma
}}(I_{\gamma })}.
\end{equation}

\begin{proposition}
\label{2}Let us consider a Lorenz like map $F$ as described in the
introduction, its transfer operator $L$ and the finite dimensional Ulam
approximation $L_{\delta }$ with grid size $(\delta ,\delta ^{\prime })$ a
described above. Let $L_{T}$ be the one dimensional transfer operator
associated to the action of $L$ on the $x-$marginals. Let $\ \mu _{0}$
described above, and let $\epsilon \geq ||f-f_{\delta }||_{L^1}$, let moreover
suppose that the whole space can be divided into two sets $X_{1}=X-\mathcal{B%
}\times I,X_{2}=$ $\mathcal{B}\times I$ for some finite union of intervals $%
\mathcal{B}\subset I$ such that $\mu _{0}(X_{2})\leq l$ (we have a bound for
the measure of the bad part of the space) and $Lip(F|_{X_{1}})\leq \overline{%
L}$ . Then it holds for each $n$ 
\begin{gather}
W(L^{n}\mu _{0},L_{\delta }^{n}\mu _{0})\leq \frac{2\delta ^{\prime }}{%
1-\lambda }+\delta  \label{err} \\
+\sum_{i=1}^{n-1}\min \{[\overline{L}^{i}(\delta +2\delta ^{\prime })+(%
\overline{L}^{i-1}+\overline{L}^{i-2}+...+1)(2\delta ^{^{\prime
}}+2l+3\epsilon )],  \notag \\
\lbrack \lambda ^{n-i}+\frac{2\delta ^{^{\prime }}}{1-\lambda }+||f_{\delta
}-L_{T}f_{\delta }||_{L^{1}}]\}.  \notag
\end{gather}
\end{proposition}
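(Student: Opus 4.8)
The plan is to insert the finite--rank operator $L_{\delta }$ into the product $L^{n}$ one factor at a time, and to estimate each resulting one--step discrepancy in two complementary ways, retaining whichever is smaller. Since $W(\mu ,\nu )=\Vert \mu -\nu \Vert $ is defined on signed measures and is subadditive (the Remark of Section~\ref{sec2}), interpolating between $L^{n}\mu _{0}$ and $L_{\delta }^{n}\mu _{0}$ through the hybrid measures $L_{\delta }^{k}L^{n-k}\mu _{0}$ gives
\[
W(L^{n}\mu _{0},L_{\delta }^{n}\mu _{0})\leq \sum_{i=1}^{n}\bigl\Vert L_{\delta }^{\,i-1}(L-L_{\delta })L^{\,n-i}\mu _{0}\bigr\Vert .
\]
Two structural facts feed the estimate of the $i$-th term. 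First, writing $L_{T,\delta }$ for the one--dimensional Ulam operator on $x$--marginals, one has $\pi _{1}(L_{\delta }\mu )=L_{T,\delta }\pi _{1}(\mu )$ and $L_{T,\delta }f_{\delta }=f_{\delta }$, so the $x$--marginal of every $L_{\delta }^{k}\mu _{0}$ equals $f_{\delta }$ exactly, while the $x$--marginal of $L^{k}\mu _{0}$ is $L_{T}^{k}f_{\delta }$, which stays within $2\epsilon $ of the true invariant density $f$ in $L^{1}$ because $\Vert f-f_{\delta }\Vert _{L^{1}}\leq \epsilon $ and $L_{T}f=f$. Second, $(L-L_{\delta })\sigma =L(I-P_{\_}P_{|})\sigma +(I-P_{\_}P_{|})LP_{\_}P_{|}\sigma $, and for a probability measure the projection $(I-P_{\_}P_{|})$ moves mass only inside grid cells, at most $\delta $ in the expanding direction and at most $\delta ^{\prime }$ in the contracting one; hence, up to the contribution of $L$ acting on the bad strip $X_{2}=\mathcal{B}\times I$, a single local correction has $W$--size of order $\delta +\delta ^{\prime }$.

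\textbf{The forward bound.} Here we propagate the $i$-th correction through the remaining iterations, letting $L$ and $L_{\delta }$ amplify. The elementary step is $\Vert L\tau \Vert \leq \overline{L}\,\Vert \tau \Vert +(\text{total variation of }\tau \text{ over }X_{2})$: split $\tau =\tau |_{X_{1}}+\tau |_{X_{2}}$, bound the first piece by $\overline{L}\Vert \tau |_{X_{1}}\Vert $ since $F|_{X_{1}}$ is $\overline{L}$--Lipschitz, and the second by its total variation since $L$ is non--expanding in total variation; each further application of $P_{\_}P_{|}$ then adds $\leq 2\delta ^{\prime }$ in the contracting direction. The total--variation mass over $X_{2}$ at every stage is $\leq l$ for the $L_{\delta }$--iterates (their marginal is $f_{\delta }$ and $\mu _{0}(X_{2})\leq l$) and $\leq l+2\epsilon $ for the intermediate measures (marginal within $2\epsilon $ of $f$), so the per--step loss is $\leq 2\delta ^{\prime }+2l+3\epsilon $. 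Unrolling the recursion over the $\sim i$ amplifying steps, starting from an initial cost $\leq \delta +2\delta ^{\prime }$ for the projection, yields exactly the first argument of the minimum,
\[
\overline{L}^{\,i}(\delta +2\delta ^{\prime })+(\overline{L}^{\,i-1}+\overline{L}^{\,i-2}+\cdots +1)(2\delta ^{\prime }+2l+3\epsilon ).
\]

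\textbf{The contracting--foliation bound, and the boundary term.} For the same term we instead refuse to let $L$ amplify: the local correction is the difference of two probability measures whose $x$--marginals differ by at most $\Vert f_{\delta }-L_{T}f_{\delta }\Vert _{L^{1}}$ (up to $O(\epsilon )$) and which differ on corresponding vertical leaves by one application of $P_{\_}P_{|}$, hence by at most $2\delta ^{\prime }$ (or, crudely, by $\Diam (Q)=1$). Running the disintegration argument of Proposition~\ref{prod} exactly as in the proof of Theorem~\ref{uno} --- which uses only the leaf contraction $\lambda $ and the $L^{1}$--non--expansiveness of $L_{T}$, and is therefore insensitive to the bad points of $T$, so that no $l$ or $\overline{L}$ enters --- and exploiting the $n-i$ genuine iterations of $L$ already acting inside $L^{n-i}\mu _{0}$, the leaf component is contracted geometrically, the successive projections accumulate a convergent $\tfrac{2\delta ^{\prime }}{1-\lambda }$, and the marginal discrepancy never grows beyond $\Vert f_{\delta }-L_{T}f_{\delta }\Vert _{L^{1}}$; this produces the second argument of the minimum, $\lambda ^{\,n-i}+\tfrac{2\delta ^{\prime }}{1-\lambda }+\Vert f_{\delta }-L_{T}f_{\delta }\Vert _{L^{1}}$. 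Taking, for each $i$, the smaller of the two bounds and summing over $i=1,\dots ,n-1$ gives the sum in the statement; the two endpoint corrections of the telescoping, together with the ever--present geometric accumulation $\sum_{k\geq 0}\lambda ^{k}\cdot 2\delta ^{\prime }$ of the contracting--direction projection error and one residual $\delta $ of horizontal resolution, account for the remaining additive $\tfrac{2\delta ^{\prime }}{1-\lambda }+\delta $.

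\textbf{Main obstacle.} The delicate point is the bad--set bookkeeping behind the forward bound: one must verify, with fully explicit constants (as required by the interval--arithmetic implementation), that the total variation over $\mathcal{B}\times I$ of every measure produced along the way stays controlled by $l+O(\epsilon )$ --- which amounts to carrying the $L^{1}$--closeness of the marginals to $f$ through both the genuine and the discretized dynamics --- and that $P_{\_}P_{|}$ truly costs at most one cell width in $W$ at each use. Organizing the decomposition so that \emph{each} telescoping term simultaneously admits both of the above bounds (rather than only one of them), and checking that Proposition~\ref{prod} applies verbatim to the hybrid measures that appear, is where the real effort goes; each bound in isolation is a routine consequence of the tools assembled in Sections~\ref{sec2}--\ref{lagi}.
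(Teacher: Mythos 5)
Your overall template (a telescoping sum plus, for each term, the minimum of a Lipschitz--forward bound and a fiber--contraction bound) is in the same spirit as the paper, but the specific decomposition you chose cannot deliver the stated estimate, and your justification of the second member of the minimum is incorrect. The paper does not telescope $L^{n}-L_{\delta }^{n}$ directly: it first introduces the intermediate operator $L_{|\delta }=P_{|}LP_{|}$ (vertical averaging only) and splits $W(L^{n}\mu _{0},L_{\delta }^{n}\mu _{0})\leq \Vert L^{n}\mu _{0}-L_{|\delta }^{n}\mu _{0}\Vert +\Vert L_{|\delta }^{n}\mu _{0}-L_{\delta }^{n}\mu _{0}\Vert $. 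In the first telescoping the one--step differences $(L-L_{|\delta })L_{|\delta }^{k-1}\mu _{0}$ have identical $x$--marginals, so Proposition \ref{prod} and the leaf contraction give $\lambda ^{n-k}2\delta ^{\prime }$ and the total $\frac{2\delta ^{\prime }}{1-\lambda }$. In the second telescoping the inner iterates are $f_{k}=L_{\delta }^{k-1}\mu _{0}$, i.e.\ iterates of the \emph{discretized} operator, and this is essential: since $f_{k}$ is already cell--averaged one has $P_{|}f_{k}=P_{\_}P_{|}f_{k}$, so the piece $(P_{|}LP_{|}-P_{|}LP_{\_}P_{|})f_{k}$ vanishes and the horizontal--resolution cost enters only once (the $k=n$ term, which is the single additive $\delta $); moreover $(f_{k})_{x}=f_{\delta }$ exactly, which is what produces the computable marginal discrepancy $\Vert f_{\delta }-L_{T}f_{\delta }\Vert _{L^{1}}$. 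In your single telescoping $\sum_{i}L_{\delta }^{i-1}(L-L_{\delta })L^{n-i}\mu _{0}$ the inner iterates are by the true operator, so neither fact is available: the cancellation fails because $L^{n-i}\mu _{0}$ is not cell--averaged, and you would pick up a horizontal cost of order $\delta $ in every term, which is absent from the bound you are supposed to prove.

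The more serious conceptual error concerns the factor $\lambda ^{n-i}$: you attribute it to ``the $n-i$ genuine iterations of $L$ already acting inside $L^{n-i}\mu _{0}$''. Iterations applied \emph{before} the discrepancy is created cannot damp it; the geometric contraction must come from the operators applied \emph{after} the difference, which in the paper are the outer copies of $L_{|\delta }$, for which Lemma \ref{duo} (the analogue of Theorem \ref{uno} carrying the extra $\frac{2\delta ^{\prime }}{1-\lambda }$ projection penalty) is proved. In your arrangement the post--discrepancy operator is $L_{\delta }^{i-1}$; you would need, and do not prove, an analogue of Lemma \ref{duo} for the fully discretized operator, and a naive version incurs an additional horizontal--averaging error of order $\delta $ per iterate (the exponent would also naturally be $i-1$, tied to the outer steps, not $n-i$). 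Your closing ``main obstacle'' paragraph in effect defers exactly this bookkeeping, which is where the proof actually lives; as written, the second member of the minimum is unjustified and the chosen decomposition forfeits the cancellations that make the paper's error budget achievable.
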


\begin{remark}\label{r6}
We remark that that since $f_{\delta }$ is known, $||f_{\delta }||_{BV}$ can
be recursively estimated, moreover, $||f_{\delta }-L_{T}f_{\delta
}||_{L^{1}} $ can also be estimated quite sharply with some computation.
Indeed, let $L_{T,\xi }$ be a Ulam discretization of $L_{T}$ on a grid of
size $\xi <\delta .$ Remark that 
\begin{equation*}
||f_{\delta }-L_{T}f_{\delta }||_{L^{1}}\leq ||f_{\delta }-L_{T,\xi
}f_{\delta }||_{L^{1}}+||L_{T,\xi }f_{\delta }-L_{T}f_{\delta }||_{L^{1}}
\end{equation*}%
here $||f_{\delta }-L_{T,\xi }f_{\delta }||_{L^{1}}$ can be estimated
explicitly by computation. On the other hand, by Lemma \ref{lemp} \footnote{
Here, since we are in dimension one, we have the freedom to chose $\xi $
very small to minimize this part of the error without increasing too much
the computation time.} 
\begin{equation*}
||L_{T,\xi }f_{\delta }-L_{T}f_{\delta }||_{L^{1}}\leq \xi (2\lambda
_{1}+1)||f_{\delta }||_{BV}+\xi B'||f_{\delta }||_{L^1}.
\end{equation*}%
Here $B'$ is the second coefficient of the Lasota Yorke inequality satisfied
by the one dimensional map $T$ (see Section \ref{LY}) and $2\lambda _{1}$ is
the first coefficient.
\end{remark}

Before the proof we state a Lemma we will use in the following.

\begin{remark}
If $G_{i}$ is family of $\lambda $-contractions, $\mu ,\nu $ probability
measures on the interval, $\mathcal{I}=\{I_{i}\}$ a partition whose diameter
is $\delta $, and $\Gamma _{i}(\mu )=\mathbf{E}(G_{i}(\mathbf{E}(\mu |%
\mathcal{I}))|\mathcal{I})$, then%
\begin{align*}
W(\Gamma _{1}\circ ...\circ \Gamma _{n}(\mu ),\Gamma _{1}\circ ...\circ
\Gamma _{n}(\nu ))& \leq 2\delta +2\lambda \delta +2\lambda ^{2}\delta
+...2\lambda ^{n-1}\delta \\
& \quad +\lambda ^{n}(W(\mu ,\nu )+2\delta ) \\
& \leq \lambda ^{n}(W(\mu ,\nu ))+\frac{2\delta }{1-\lambda }.
\end{align*}
\end{remark}

\begin{lemma}
\label{duo}Let $F:\Sigma \rightarrow \Sigma $ as above, $\mu ,\nu \in
PM(\Sigma )$ with absolutely continuous marginals $\mu _{x},\nu _{x}$. Let
us define $L_{|\delta }=P_{|}LP_{|}$, then 
\begin{equation*}
||L_{|\delta }^{n}\mu -L_{|\delta }^{n}\nu ||\leq \lambda ^{n}+\frac{2\delta
^{^{\prime }}}{1-\lambda }+V(\mu _{_{x}},\nu _{_{x}}).
\end{equation*}
\end{lemma}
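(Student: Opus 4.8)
The plan is to run the proof of Theorem \ref{uno} almost verbatim --- decomposing $\mu $ and $\nu $ according to the monotonicity intervals of $T^{n}$ and applying Proposition \ref{prod} branch by branch --- while inserting the averaging operator $P_{|}$ at each of the $n$ steps, and to control the error it introduces by the telescoping estimate stated in the Remark just above.

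The observation that makes this work, and which I would establish first, is that $P_{|}$ commutes with multiplication by indicator functions of vertical strips $J\times I$, $J\subset I$: each $\varphi _{i}=1_{Q_{i}}$ depends only on the vertical coordinate, so $\pi _{1}^{\ast }(1_{J\times I}\varphi _{i}\mu )=1_{J\times I}\pi _{1}^{\ast }(\varphi _{i}\mu )$ and hence $P_{|}(1_{J\times I}\mu )=1_{J\times I}P_{|}\mu $. Three consequences will be used: $P_{|}$ leaves the $x$-marginal unchanged, $\pi _{1}(P_{|}\mu )=\mu _{x}$; it carries a measure supported on $J\times I$ to a measure supported on $J\times I$; and on each vertical leaf $\gamma $ it acts as the conditional expectation $\mathbf{E}(\,\cdot \mid \mathcal{I})$ with respect to the partition $\mathcal{I}=\{I_{i}\}$ of the leaf into $\delta ^{\prime }$-long segments. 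Writing $L_{|\delta }^{n}=P_{|}LP_{|}L\cdots P_{|}LP_{|}$ ($n$ copies of $L$, using $P_{|}^{2}=P_{|}$) and restricting to a branch $I_{i}$ of $T^{n}$ (on which $T^{n}$ is injective), the first consequence gives that the $x$-marginal is carried by $L_{T}^{n}$, a weak contraction for the total variation distance; the second shows every intermediate iterate of $1_{I_{i}\times I}\mu $ lives over a single monotonicity interval of $T$, so no branch-mixing occurs on the leaves; and the third lets one identify the action on the leaf conditionals with a composition $\Gamma _{1}\circ \cdots \circ \Gamma _{n}$, $\Gamma _{j}(\cdot )=\mathbf{E}(G_{j}(\mathbf{E}(\,\cdot \mid \mathcal{I}))\mid \mathcal{I})$, the $G_{j}$ being the $\lambda $-contracting fibre maps $G(x_{j},\cdot )$ along the relevant backward orbit. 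The Remark above (with $\delta ^{\prime }$ in place of $\delta $) then bounds the leafwise distance by $\lambda ^{n}W((\mu _{i})_{\beta },(\nu _{i})_{\beta })+\tfrac{2\delta ^{\prime }}{1-\lambda }$, where $\beta =(T^{n}|_{I_{i}})^{-1}\gamma $.

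It then remains to assemble the pieces as in Theorem \ref{uno}: integrate the leafwise bound against the pushed marginal $L_{T^{n}|_{I_{i}}}((\nu _{i})_{x})$ --- the change of variables $\gamma \mapsto (T^{n}|_{I_{i}})^{-1}\gamma $ restores $\int _{I_{i}}W((\mu _{i})_{\gamma },(\nu _{i})_{\gamma })\,d(\nu _{i})_{x}$, while the additive constant $\tfrac{2\delta ^{\prime }}{1-\lambda }$ simply gets multiplied by the mass $\nu _{x}(I_{i})$ --- combine with the marginal term $\Vert (\mu _{i})_{x}-(\nu _{i})_{x}\Vert _{L^{1}}$ via Proposition \ref{prod}, and sum over $i$. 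Using $\sum _{i}\nu _{x}(I_{i})=1$, $\sum _{i}\Vert (\mu _{i})_{x}-(\nu _{i})_{x}\Vert _{L^{1}}=V(\mu _{x},\nu _{x})$ and $\int W(\mu _{\gamma },\nu _{\gamma })\,d\nu _{x}\leq \Diam (Q)=1$, this yields $W(L_{|\delta }^{n}\mu ,L_{|\delta }^{n}\nu )\leq \lambda ^{n}+\tfrac{2\delta ^{\prime }}{1-\lambda }+V(\mu _{x},\nu _{x})$, which is the claim.

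The main obstacle is conceptual rather than computational: one must notice that $P_{|}$ is transparent to the $x$-direction branch decomposition, which is precisely what lets the branch-splitting trick of Theorem \ref{uno} coexist with the step-by-step telescoping of the Remark; without this, interleaving an averaging with $L$ at each step would appear to mix the branches and destroy the clean per-branch estimate. Minor points to verify are the well-definedness of the leaf conditionals on leaves not met by a given branch (handled as in Remark \ref{rem:well_defined}, e.g. taking them to be Lebesgue measure) and the bookkeeping identifying the leafwise operator with the $\Gamma _{1}\circ \cdots \circ \Gamma _{n}$ of the Remark through $P_{|}^{2}=P_{|}$.
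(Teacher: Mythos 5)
Your proposal is correct and follows essentially the same route as the paper's own proof: decompose along the branches of $T^{n}$, use that $P_{|}$ preserves $x$-marginals and acts leafwise as conditional expectation so the Remark's telescoping bound $\lambda ^{n}W+\tfrac{2\delta ^{\prime }}{1-\lambda }$ applies on each fibre, then integrate, change variables and sum via Proposition \ref{prod} exactly as in Theorem \ref{uno}. The only difference is that you spell out the commutation of $P_{|}$ with vertical-strip indicators and the identification with the $\Gamma _{1}\circ \cdots \circ \Gamma _{n}$ composition, steps the paper leaves implicit.
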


\begin{proof}
The proof is similar to the one of Theorem \ref{uno}. Let us consider $%
\{I_{i}\}_{i=1,..,m}$ the intervals where the branches of $T^{n}$ are
defined. Let us consider $\overline{\varphi }_{i}=1_{I_{i}\times I}$ and $%
\mu _{i}=\overline{\varphi }_{i}\mu $, $\nu _{i}=\overline{\varphi }_{i}\nu $
, then $\mu =\sum \mu _{i}$, $\nu =\sum \nu _{i}$ , and then 
\begin{equation}
W(L_{|\delta }^{n}(\mu ),L_{|\delta }^{n}(\nu ))\leq \sum W(L_{|\delta
}^{n}(\mu _{i}),L_{|\delta }^{n}(\nu _{i})).  \label{summ}
\end{equation}

Let us denote $T_{i}=T^{n}|_{I_{i}}$ , as before. Recall that $L_{|\delta }$
and $L_{F}$ have the same behavior on $x$ marginals:$(L_{|\delta }(\mu
))_{x}=(L_{F}(\mu ))_{x}=L_{T}(\mu _{x})$.%
\begin{gather*}
W(L_{|\delta }^{n}(\mu _{i}),L_{|\delta }^{n}(\nu _{i}))\leq
\int_{I}W((L_{|\delta }^{n}\mu _{i})_{\gamma },(L_{|\delta }^{n}\nu
_{i})_{\gamma })dL_{T^{n}}((\nu _{i})_{x})+||L_{T}^{n}((\mu
_{i})_{x})-L_{T}^{n}((\nu _{i})_{x})||_{L^1} \\
\leq \int_{I}W(L_{|\delta }^{n}((\mu _{i})_{T_{i}^{-1}(\gamma )}),L_{|\delta
}^{n}((\nu _{i})_{T_{i}^{-1}(\gamma )}))dL_{T^{n}}((\nu _{i})_{x})+||(\mu
_{i})_{x}-(\nu _{i})_{x}||_{L^1}
\end{gather*}

by the above Remark this is bounded by%
\begin{gather*}
\leq \frac{2\mu _{x}(I_{i})\delta ^{\prime }}{1-\lambda }+\lambda
^{n}\int_{I}W((\mu _{i})_{T_{i}^{-1}(\gamma )},(\nu _{i})_{T_{i}^{-1}(\gamma
)})dL_{T^{n}}((\nu _{i})_{x})+||(\mu _{i})_{x}-(\nu _{i})_{x}||_{L^1} \\
\leq \frac{2\mu _{x}(I_{i})\delta ^{\prime }}{1-\lambda }+\lambda
^{n}\int_{I_{i}}W_{1}((\mu _{i})_{\gamma },(\nu _{i})_{\gamma })d((\nu
_{i})_{x})+||(\mu _{i})_{x}-(\nu _{i})_{x}||_{L^1}
\end{gather*}

where the last step is by change of variable. Hence by Equation \ref{summ}%
\begin{equation}
W(L_{|\delta }^{n}(\mu ),L_{|\delta }^{n}(\nu ))\leq \frac{2\delta ^{\prime }%
}{1-\lambda }+\lambda ^{n}\int_{I}W(\mu _{\gamma },\nu _{\gamma })d(\nu
_{x})+||(\mu )_{x}-(\nu )_{x}||_{L^1}.
\end{equation}
\end{proof}

\begin{proof}[Proof of Proposition \protect\ref{2}]
We recall that in the following, we will consider probability measures
having absolutely continuous marginals. Remark that%
\begin{align*}
W(L^{n}\mu _{0},L_{\delta }^{n}\mu _{0})&\leq W(L^{n}\mu _{0},L_{|\delta
}^{n}\mu _{0})+W(L_{|\delta }^{n}\mu _{0},L_{\delta }^{n}\mu _{0}) \\
&=||L^{n}\mu _{0}-L_{|\delta }^{n}\mu _{0}||+||L_{|\delta }^{n}\mu
_{0}-L_{\delta }^{n}\mu _{0}||.
\end{align*}%
The two summands will be estimated separately in the following items:

\begin{enumerate}
\item Remark that $||L^{n}\mu _{0}-L_{|\delta }^{n}\mu
_{0}||=||\sum_{1}^{n}L^{n-k}(L-L_{|\delta })L_{|\delta }^{k-1}\mu _{0}||.$

Let us estimate $||L^{n-k}(L-L_{|\delta })L_{|\delta }^{k-1}\mu _{0}||$.
Denoting $L_{|\delta }^{k-1}\mu _{0}=g_{k}$, we have $||((L-L_{|\delta
})g_{k})_{\gamma }||\leq 2\delta ^{\prime }$on each leaf $\gamma $ (because $%
||(P_{|}g_{k}-g_{k})_{\gamma }||\leq \delta ^{\prime }$ on each leaf, and $L$
applied to two disintegrated measures having the same marginal does not
increase distance of the respective measures induced on the leaves). Since
the projections $\pi _{1}(Lg_{k})=\pi _{1}(L_{|\delta }g_{k})$ are the same,
by Proposition \ref{prod}, $||L^{n-k}(L-L_{|\delta })g_{k}||\leq \lambda
^{n-k}2\delta ^{\prime }$ where $\lambda $ is the rate of contraction of
fibers. By this $||L^{n}\mu _{0}-L_{|\delta }^{n}\mu _{0}||\leq \frac{%
2\delta ^{\prime }}{1-\lambda }$.

\item In the same way as before $||L_{|\delta }^{n}\mu _{0}-L_{\delta
}^{n}\mu _{0}||=||\sum_{1}^{n}L_{|\delta }^{n-k}(L_{|\delta }-L_{\delta
})L_{\delta }^{k-1}\mu _{0}||.$

Let us estimate $||L_{|\delta }^{n-k}(L_{|\delta }-L_{\delta })L_{\delta
}^{k-1}\mu _{0}||$. Denoting $L_{\delta }^{k-1}\mu _{0}=f_{k}$, we have 
\begin{eqnarray*}
||L_{|\delta }^{n-k}(L_{|\delta }-L_{\delta })f_{k}|| &\leq &||L_{|\delta
}^{n-k}(P_{|}LP_{|}-P_{|}LP_{_{-}}P_{|})f_{k}||+ \\
&&||L_{|\delta
}^{n-k}(P_{|}LP_{_{-}}P_{|}-P_{_{-}}P_{|}LP_{_{-}}P_{|})f_{k}||
\end{eqnarray*}
\end{enumerate}

Let us \ estimate $||L_{|\delta
}^{n-k}(P_{_{-}}P_{|}LP_{_{-}}P_{|}-P_{|}LP_{_{-}}P_{|})f_{k}||$. First let
us consider $k=n$. In this case by transporting horizontally the measure to
average inside each rectangle (recall that $f_{k}$ is a probability measure) 
\begin{equation*}
||(P_{_{-}}P_{|}LP_{_{-}}P_{|}-P_{|}LP_{_{-}}P_{|})f_{k}||\leq \delta .
\end{equation*}

Now let us face the case where $|k-n|\neq 0$. We will give two estimations
for $||L_{|\delta
}^{n-k}(P_{_{-}}P_{|}LP_{_{-}}P_{|}-P_{|}LP_{_{-}}P_{|})f_{k}||$; one will
be suited when $n-k$ is small, and the other when it is large. Then we can
take the minimum of the two estimations. The first estimation is based on
splitting the space into two subsets, in the first subset the map is not too
much expansive, the second set has small measure. This allow to estimate the
maximal expansion rate of $L_{|\delta }^{n-k}$ with respect to the \
Wasserstein distance. The second estimation is based on disintegration,
similar to theorem \ref{uno}.

Now let us face the case where $|k-n|\neq 0$ is small.

We find an estimation for $||L_{|\delta }^{i}(\mu -\nu )||$ for a \ pair of
probability measures $\mu $ and $\nu $ which is suitable when $i$ is small.
Let us divide the space $X$ into two sets $X_{1},X_{2}$ such that: $\mu
(X_{2}),\nu (X_{2})\leq l$, $Lip(F|_{X_{1}})\leq \overline{L}$.%
\begin{eqnarray*}
||L_{|\delta }(\mu -\nu )|| &\leq &2\delta ^{^{\prime }}+\sup_{\substack{ %
Lip(g)\leq 1 \\ ||g||_{\infty }\leq 1}}|\int_{X}gd(LP_{|}\mu -LP_{|}\nu )| \\
&\leq &2\delta ^{^{\prime }}+\sup_{_{\substack{ Lip(g)_{\leq 1} \\ %
||g||_{\infty }\leq 1}}}|\int_{X_{1}}g\circ F~d(P_{|}\mu -P_{|}\nu
)+\int_{X_{2}}g\circ F~d(P_{|}\mu -P_{|}\nu )| \\
&\leq &2\delta ^{^{\prime }}+\overline{L}(||\mu -\nu ||+2\delta ^{\prime
})+2l.
\end{eqnarray*}%
We now iterate, we need that the above general assumptions are preserved.
Recalling that $X_{1}=X-\mathcal{B}\times I,X_{2}=$ $\mathcal{B}\times I$,
we have that, since the map preserves the contracting foliation and since
its one dimensional induced transfer operator $L_{T}$ is a $L^{1}$
contraction then $||L_{T}^{i}f_{\delta }-f_{\delta }||_{L^1}\leq
||L_{T}^{i}f_{\delta }-f_{\delta }-L_{T}^{i}f+f||_{L^1}\leq 2\epsilon ,$ hence 
$L_{|\delta }^{i}\mu _{0}(X_{2})\leq l+2\epsilon ,L_{|\delta }^{i}\overline{%
\mu }(X_{2})\leq l+2\epsilon $ and%
\begin{eqnarray*}
||L_{|\delta }^{n-k}(P_{_{-}}P_{|}LP_{_{-}}P_{|}-P_{|}LP_{_{-}}P_{|})f_{k}||
&\leq &\overline{L}^{n-k}(\delta +2\delta ^{\prime })+\overline{L}%
^{n-k-1}(2\delta ^{^{\prime }}+2l+3\epsilon ) \\
&&+\overline{L}^{n-k-2}(2\delta ^{^{\prime }}+2l+3\epsilon )+...+(2\delta
^{^{\prime }}+2l+3\epsilon ).
\end{eqnarray*}%
Now let us face the case which seems to be suited when $|k-n|\neq 0$ is
large; let us consider%
\begin{equation*}
V((P_{_{-}}P_{|}LP_{_{-}}P_{|}f_{k})_{x}-(P_{|}LP_{_{-}}P_{|}f_{k})_{x}).
\end{equation*}%
Recalling that $L_{T}$ is the one dimensional transfer operator associated
to $T$ and\ $L_{T,\delta }$ is its Ulam discretization with a grid of size $%
\delta $, since $f_{\delta }=(f_{k})_{x}$ is invariant for the one
dimensional approximated transfer operator then%
\begin{equation*}
L_{T,\delta }(\mu )=\pi _{1}(L_{\delta }(\mu \times m))
\end{equation*}%
associated to $L_{\delta }$ then considering the disintegration and the
marginals on the $x$ axis%
\begin{equation*}
||(P_{_{-}}P_{|}LP_{_{-}}P_{|}f_{k})_{x}-(P_{|}LP_{_{-}}P_{|}f_{k})_{x}||_{L^{1}}=||f_{\delta }-L_{T}f_{\delta }||_{L^{1}}.
\end{equation*}%
Thus by Lemma \ref{duo}, 
\begin{equation*}
||L_{|\delta
}^{n-k}(P_{_{-}}P_{|}LP_{_{-}}P_{|}-P_{|}LP_{_{-}}P_{|})f_{k}||\leq \lambda
^{n-k}+\frac{2\delta ^{^{\prime }}}{1-\lambda }+||f_{\delta }-L_{T}f_{\delta
}||_{L^{1}}.
\end{equation*}%
Now, let us estimate $||L_{|\delta
}^{n-k}(P_{|}LP_{|}-P_{|}LP_{_{-}}P_{|})f_{k}||$. We remark that $%
P_{|}f_{k}=P_{_{-}}P_{|}f_{k}$. Indeed $%
f_{k}=(P_{_{-}}P_{|}LP_{_{-}}P_{|})L_{\delta }^{k-2}\mu _{0}$ thus it has
already averaged on the horizontal direction, this is not changed by
applying $P_{|}$, and then applying again $P_{_{-}}$ has no effect. Hence $%
||L_{|\delta }^{n-k}(P_{|}LP_{|}-P_{|}LP_{_{-}}P_{|})f_{k}||=0$.

Summarizing, considering that we can take the minimum of the two different
estimations and putting all small terms in a sum, we have Equation (\ref{err}%
).
\end{proof}

\section{The algorithm}

The considerations made above justify an algorithm for the computation with
explicit bound on the error for the physical invariant measure of Lorenz\
like systems we describe informally below.

\begin{algorithm}
\begin{enumerate}
\item Input $\delta ,\delta ^{\prime }$. Compute a $L^1$ approximation for the
marginal one dimensional invariant measure $f_{\delta }$ of the one
dimensional induced map $T$ (see Section \ref{1d} for the details)

\item Input $n.$ Use Theorem \ref{uno} to estimate $W(L_{F}^{n}(\mu
_{0}),L_{F}^{n}(\overline{\mu })).$

\item Use Proposition \ref{2} to estimate the distance $W(L_{\delta
}^{n}(\mu _{0}),L_{F}^{n}(\mu _{0}))$

\item Compute an approximation $\tilde{\mu}$ for $L_{\delta }^{n}(\mu _{0})$
up to an error $\eta $.

\item Output $\tilde{\mu}$ and $W(L_{F}^{n}(\mu _{0}),L_{F}^{n}(\mu ))+$ $%
W(L_{\delta }^{n}(\mu _{0}),L_{F}^{n}(\mu _{0}))+\eta.$
\end{enumerate}
\end{algorithm}

\begin{proposition}
What is proved above implies that $\tilde{\mu}$ is such that%
\begin{equation*}
W(\tilde{\mu},\overline{\mu })\leq W(L_{F}^{n}(\mu _{0}),L_{F}^{n}(\overline{%
\mu }))+W(L_{\delta }^{n}(\mu _{0}),L_{F}^{n}(\mu _{0}))+\eta .
\end{equation*}
\end{proposition}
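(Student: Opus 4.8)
The plan is to derive the displayed inequality from the triangle inequality for the distance $W$ together with the three error estimates established in the previous sections. Recall that $W(\mu_{1},\mu_{2})$ is defined as a supremum over a fixed family of test functions $g$ (those with $L(g)\le 1$ and $\|g\|_{\infty}\le 1$) of $|\int g\,d\mu_{1}-\int g\,d\mu_{2}|$. Since each functional $(\mu_{1},\mu_{2})\mapsto|\int g\,d\mu_{1}-\int g\,d\mu_{2}|$ satisfies the triangle inequality, so does the supremum $W$; hence $W$ obeys the triangle inequality on the space of finite signed measures, and all four measures that will appear ($\tilde{\mu}$, $L_{\delta}^{n}\mu_{0}$, $L_{F}^{n}\mu_{0}$, $L_{F}^{n}\overline{\mu}$) lie in that space because $\mu_{0}$ and $\overline{\mu}$ are probability measures and $L_{F}$, $L_{\delta}$ send probability measures to probability measures.

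First I would insert the two natural intermediate objects $L_{\delta}^{n}\mu_{0}$ and $L_{F}^{n}\mu_{0}$ between $\tilde{\mu}$ and $\overline{\mu}$, obtaining
\[
W(\tilde{\mu},\overline{\mu})\le W(\tilde{\mu},L_{\delta}^{n}\mu_{0})+W(L_{\delta}^{n}\mu_{0},L_{F}^{n}\mu_{0})+W(L_{F}^{n}\mu_{0},L_{F}^{n}\overline{\mu}).
\]
Then I would bound the three summands in turn. The first is at most $\eta$ by Step 4 of the Algorithm, where $\tilde{\mu}$ is produced precisely as an approximation of $L_{\delta}^{n}\mu_{0}$ up to an error $\eta$ measured in the $W$ distance. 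The second summand is left untouched: it is exactly the quantity estimated explicitly in Proposition \ref{2}. The third summand is also left untouched: it is exactly the quantity estimated in Theorem \ref{uno}, applied to the pair $\mu_{0},\overline{\mu}$, both of which have absolutely continuous marginals on the expanding direction. Substituting the bound $\eta$ for the first summand then yields the displayed inequality.

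There is essentially no genuine obstacle here; the statement is a bookkeeping corollary that collects the three sources of error produced in the previous sections into a single triangle-inequality estimate. The only points that need checking are that $W$ really does satisfy the triangle inequality on the relevant class of measures (immediate from its definition as a supremum of test-function integrals) and that the intermediate measures indeed belong to that class (immediate since $\mu_{0}$ and $\overline{\mu}$ are probability measures and the transfer operators preserve probability measures), both of which are routine.
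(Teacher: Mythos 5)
Your proof is correct and is essentially the argument the paper leaves implicit ("what is proved above implies"): triangle inequality with intermediates $L_{\delta}^{n}\mu_{0}$ and $L_{F}^{n}\mu_{0}$, bounding $W(\tilde{\mu},L_{\delta}^{n}\mu_{0})\leq\eta$ by Step 4 and leaving the other two terms to Proposition \ref{2} and Theorem \ref{uno}. The only point you use silently is that $\overline{\mu}=L_{F}^{n}\overline{\mu}$ (invariance of the physical measure), which is exactly how the paper itself identifies $W(\overline{\mu},L_{F}^{n}\mu_{0})$ with $W(L_{F}^{n}\overline{\mu},L_{F}^{n}\mu_{0})$, so no genuine gap remains.
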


Of course this is an a posteriori estimation for the error. Hence it might
be that the error of approximation is not satisfying. In this case one can
restart the algorithm with a larger $n$ and smaller $\delta ,\delta ^{\prime
}$.

\begin{remark}
\label{works}We remark that for each $\varepsilon $, there are  integers $ m, n$
and grid sizes $\delta $, $\delta ^{\prime }$, $\xi $ such that the above
algorithm applied to $F^{m}$ computes a measure $\tilde{\mu}$ such that $W(%
\tilde{\mu},\overline{\mu })\leq \varepsilon $.

Indeed choose $m$ such that $\lambda ^{m}\leq \frac{\varepsilon }{10}$ and $%
n=2$ iterations. Choose $\delta $ such that $||f-f_{\delta }||_{L_{1}}\leq 
\frac{\varepsilon }{10}$ (see e.g. \cite{GN}, Section 5.1 for the proof that
such an approximation is possible up to any small error) then by Theorem \ref%
{uno}, $||L^2 \mu _{0}-\overline{\mu }||\leq \frac{\varepsilon }{5}$.

Let us suppose that $\delta $ and $\xi $ are so small that $\delta +\frac{%
4\delta ^{^{\prime }}}{1-\lambda }+||f_{\delta }-L_{T,\xi }f_{\delta
}||_{L^{1}}+\xi (2\lambda _{1}+1)||f_{\delta }||_{BV}+\xi B'||f_{\delta
}||_{L^1}\leq \frac{\varepsilon }{10}$. This is possible because $||f_{\delta
}-L_{T,\xi }f_{\delta }||_{L^{1}}\leq ||f_{\delta }-f_{\xi
}||_{L^{1}}+||f_{\xi }-L_{T,\xi }f_{\delta }||_{L^{1}}\leq 2||f_{\delta
}-f_{\xi }||_{L^{1}}$ and $||f_{\delta }-f_{\xi }||_{L^{1}}\leq ||f_{\delta
}-f||_{L^{1}}+||f-f_{\xi }||_{L^{1}}.$

Then by Proposition \ref{2}, $||L^2\mu _{0}-L^2_{(\delta ,\delta ^{\prime })}\mu
_{0}||\leq \frac{\varepsilon }{5}$ and we have that $W(\tilde{\mu},\overline{%
\mu })$ can be made as small as wanted.

It is clear that the choice of the parameters which is given above might be
not optimal, and setting a suitable $m$ or $n$ we might achieve a better
approximation. The purpose of this remark is just to show that our method
can in principle approximate the physical measure up to any small error.
\end{remark}

\section{Dimension of Lorenz like attractors}

\label{dim}

We show how to use the computation of the invariant measure to compute the
fractal dimension of a Lorenz like attractor.

We recall and use a result of Steinberger \cite{Stein00} which gives a
relation between the entropy of the system and its geometrical features.

Let us consider a map $F:Q\rightarrow Q$, $F(x,y)=(T(x),G(x,y))$ satisfying
the items 1)...4) in the Introduction, and

\begin{itemize}
\item $F((c_{i},c_{i+1})\times \lbrack 0,1])\cap F((c_{j},c_{j+1})\times
\lbrack 0,1])=\emptyset $ for distinct $i,j$ with $0\leq i,j<N$.
\end{itemize}

%

Let us consider the projection $\pi :Q\rightarrow I$, set $%
V=\{(c_{i},c_{i+1}),1\leq i\leq N\}$, consider $V_{k}=%
\bigvee_{i=0}^{k}T^{-i}V$. For $x\in E$ let $J_{k}(x)$ be the unique element
of $V_{k}$ which contains $x$. We say that $V$ \emph{is a generator} if the
length of the intervals $J_{k}(x)$ tends to zero for $n\rightarrow \infty $
for any given $x$. For a topologically mixing piecewise expanding maps $V$
is a generator. Set 
\begin{equation*}
\psi (x,y)=\log |T^{\prime }(x)|\quad \mbox{and}\quad \varphi (x,y)=-\log
|(\partial G/\partial y)(x,y)|.
\end{equation*}%
The result we shall use to estimate the dimension is the following

\begin{theorem}
\cite[Theorem 1]{Stein00} \label{th:Steinberg} Let $F$ be a two-dimensional
map as above and $\mu _{F}$ an ergodic, $F$-invariant probability measure on 
$Q$ with the entropy $h_{\mu }(F)>0$. Suppose $V$ is a generator, $\int
\varphi ~d\mu _{F}<\infty $ and $0<\int \psi ~d\mu _{F}<\infty $. If the
maps $y\mapsto \varphi (x,y)$ are uniformly equicontinuous for $x\in
I\setminus \{0\}$ and $1/|T^{\prime }|$ has finite universal $p$- Bounded
Variation, then 
\begin{equation*}
d_{\mu }(x,y)=h_{\mu }(F)\left( \frac{1}{\int \psi ~d\mu }+\frac{1}{\int
\varphi ~d\mu }\right)
\end{equation*}%
for $\mu $-almost all $(x,y)\in Q$.
\end{theorem}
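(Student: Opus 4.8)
This is exactly \cite[Theorem 1]{Stein00}; I only outline the structure of the argument, which is a thermodynamic/Ledrappier--Young computation adapted to the piecewise hyperbolic, non-product setting. The plan is to split the pointwise dimension $d_{\mu }(x,y)$ into a contribution coming from the expanding base direction and one coming from the contracting fibre direction, to compute each separately, and then to recombine them by a slicing argument. To set this up I would push $\mu _{F}$ down to the base, $\nu =\pi _{\ast }\mu _{F}$, a $T$-invariant probability on $I$, and disintegrate $\mu _{F}=\int \mu _{\gamma }\, d\nu (\gamma )$ along the vertical foliation. Because $F$ uniformly contracts each vertical leaf, the vertical partition has zero entropy relative to $\pi $ (Abramov--Rokhlin), so $h_{\mu _{F}}(F)=h_{\nu }(T)$; call this common number $h$. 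This already shows that the entropy entering the two contributions below is one and the same, which is why the final formula is $h$ times a sum of two reciprocal exponents.

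For the expanding direction I would show $d_{\nu }(x)=h/\int \log |T^{\prime }|\, d\nu $ for $\nu $-a.e.\ $x$ (note $\int \log |T^{\prime }|\, d\nu =\int \psi \, d\mu _{F}$ since $\psi $ depends only on $x$). Since $V$ is a generator, Shannon--McMillan--Breiman gives $-\tfrac{1}{n}\log \nu (J_{n}(x))\to h$, where $J_{n}(x)\in V_{n}$ is the cylinder containing $x$. The key metric estimate is $|J_{n}(x)|\asymp \big(\prod _{k=0}^{n-1}|T^{\prime }(T^{k}x)|\big)^{-1}$ up to a distortion factor that remains bounded because $T$ is piecewise $C^{1}$ and $1/|T^{\prime }|$ has finite universal $p$-bounded variation: this hypothesis is precisely what tames the possibly infinite derivative. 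Birkhoff's theorem applied to the integrable $\psi =\log |T^{\prime }|$ then gives $\tfrac{1}{n}\log \prod _{k=0}^{n-1}|T^{\prime }(T^{k}x)|\to \int \psi \, d\mu _{F}$, and since $\{J_{n}(x)\}$ is a nested neighbourhood basis with comparable consecutive scales, $\log \nu (J_{n}(x))/\log |J_{n}(x)|\to h/\int \psi \, d\mu _{F}$ yields the claimed $d_{\nu }$.

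For the contracting direction I would run the analogous argument inside a fibre. Pulling $\gamma $ back by $F^{-n}$ along the relevant branch and pushing forward by $F^{n}$ produces a nested sequence of sub-intervals of $\gamma $ around a point $y$; their $\mu _{\gamma }$-measure decays like $e^{-nh}$ (again SMB, through the base coding, which carries the full entropy), while their length is $\asymp \prod _{k}|(\partial G/\partial y)(F^{k}(\cdot ))|$ along the corresponding orbit, hence $\asymp e^{-n\int \varphi \, d\mu _{F}}$ by Birkhoff applied to $\varphi =-\log |\partial G/\partial y|$, which is $\ge -\log \lambda >0$ and integrable by hypothesis. The uniform equicontinuity of the maps $y\mapsto \varphi (x,y)$ is exactly what keeps the distortion inside fibres under control so that this Birkhoff limit is uniform enough over the fibre. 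This gives $d_{\mu _{\gamma }}(y)=h/\int \varphi \, d\mu _{F}$ for $\nu $-a.e.\ $\gamma $ and $\mu _{\gamma }$-a.e.\ $y$.

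Finally I would recombine: in the local product coordinates $I\times [0,1]$ one has $\mu _{F}(B((x,y),r))\approx \int _{|\gamma -x|<r}\mu _{\gamma }(B_{\gamma }(y,r))\, d\nu (\gamma )$, and feeding in the two a.e.\ scaling exponents computed above, together with ergodicity of $\mu _{F}$ (which makes $\int \psi \, d\mu _{F}$, $\int \varphi \, d\mu _{F}$ and $h$ genuine constants), gives $\log \mu _{F}(B((x,y),r))/\log r\to d_{\nu }(x)+d_{\mu _{\gamma _{x}}}(y)=h\big(1/\int \psi \, d\mu _{F}+1/\int \varphi \, d\mu _{F}\big)$. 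The main obstacle is this last step: interchanging $r\to 0$ with the integral over the fibre coordinate, and controlling the dependence of $d_{\mu _{\gamma }}$ and of the conditionals $\mu _{\gamma }$ on $\gamma $, is the classical Marstrand-type slicing difficulty for measures on non-product sets. One handles it with a Besicovitch covering / martingale-density argument, showing that the set where the above convergence is slow has $\mu _{F}$-measure that can be made uniformly small; combined with the distortion control at points where $|T^{\prime }|=\infty $, this is the only genuinely delicate part, the rest being a routine adaptation of the standard Hofbauer and Ledrappier--Young machinery.
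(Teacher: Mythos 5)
You should note at the outset that the paper does not prove this statement at all: Theorem \ref{th:Steinberg} is quoted verbatim from Steinberger \cite[Theorem 1]{Stein00} and used as a black box to compute the dimension, so there is no in-paper proof to compare your sketch against. Judged on its own terms, your outline reconstructs the right circle of ideas, essentially Young's two-dimensional dimension formula $d_{\mu}=h(1/\lambda_{u}+1/|\lambda_{s}|)$ adapted to the piecewise hyperbolic, non-invertible-base setting, which is indeed the spirit of Steinberger's argument: base dimension $h/\int\psi\,d\mu$ via Shannon--McMillan--Breiman on the generating partition $V$ plus Birkhoff for $\psi$, with the finite universal $p$-bounded variation of $1/|T'|$ supplying the distortion control near the points of infinite derivative; fibre dimension $h/\int\varphi\,d\mu$ via the contraction rates and the equicontinuity of $y\mapsto\varphi(x,y)$; and a final slicing/recombination step. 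Your identification $h_{\mu}(F)=h_{\nu}(T)$ through Abramov--Rokhlin is also consistent with what the paper establishes separately (by a quite different computability/orbit-complexity argument) in the lemma following the theorem.

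That said, as a proof your text has genuine gaps exactly where the real work lies, and you acknowledge them rather than close them. First, the claim that the $\mu_{\gamma}$-measure of the nested fibre pieces decays like $e^{-nh}$ ``through the base coding'' is the heart of the stable-direction estimate: it requires the disjointness hypothesis $F((c_{i},c_{i+1})\times[0,1])\cap F((c_{j},c_{j+1})\times[0,1])=\emptyset$ (so that the forward images of the strips code orbits injectively and the conditional measures of the $n$-th level fibre components are governed by the same entropy $h$), and it needs a careful SMB-type statement for the conditional measures, not just for $\nu$; your one-line appeal to SMB does not deliver this. Second, the lengths of those fibre components are Birkhoff sums of $\varphi$ along inverse branches, so one must either pass to a natural extension or argue via invariance that the relevant averages converge a.e.; ``along the corresponding orbit'' glosses over this. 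Third, the interchange of $r\to0$ with the fibre integral in the final step, which you correctly flag as the delicate Marstrand-type point, is asserted rather than carried out. So your proposal is a faithful road map of the cited theorem's proof strategy, but it is a sketch with the hard estimates deferred, which is acceptable here only because the paper itself treats the result as an external citation.
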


\begin{remark}
We remark that since the right hand of the equation does not depend on $%
(x,y) $, this implies that the system is exact dimensional.

We also remark that $\int \psi ~d\mu $ \ can be computed by the knowledge of
the measure of the 1 dimensional map under small errors in the $L^{1}$ norm
\ and having a bound for its density (see Section \ref{8}).
\end{remark}

The following should be more or less well known to the experts, however
since we do not find a reference we present a rapid sketch of proof.

\begin{lemma}
If $(F,\mu )$ as above is a computable dynamical system \footnote{%
For the precise definition, see \cite{GHR07}. In practice, since the
invariant measure is computable starting from the definition of $F$, this is
satisfied by Remark \ref{works} for example when $F$ is given explicitly
like in Equations \ref{eq:lorenz} and \ref{eq:lorenz_y}.} then 
\begin{equation*}
h_{\mu }(F)=h_{\mu _{x}}(T).
\end{equation*}
\end{lemma}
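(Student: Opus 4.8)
The plan is to exploit the fact that $T$ is a measure-theoretic factor of $F$ whose fibre dynamics, being uniformly contracting, carries no entropy. Let $\pi:Q\to I$ be the projection $\pi(x,y)=x$. From $F(x,y)=(T(x),G(x,y))$ one has $\pi\circ F=T\circ\pi$, and the disintegration $(\ref{dis})$ shows that $\mu_x=\pi_{*}\mu$; hence $(I,T,\mu_x)$ is a factor of $(Q,F,\mu)$ and the inequality $h_{\mu_x}(T)\le h_\mu(F)$ is immediate. For the reverse inequality I would invoke the Abramov--Rokhlin addition formula for an extension,
\begin{equation*}
h_\mu(F)=h_{\mu_x}(T)+h_\mu(F\mid\pi^{-1}\mathcal{B}_I),
\end{equation*}
where $\mathcal{B}_I$ is the Borel $\sigma$-algebra of $I$ and
\begin{equation*}
h_\mu(F\mid\pi^{-1}\mathcal B_I)=\sup_{\mathcal P}\;\lim_{n\to\infty}\frac1n\,H_\mu\left(\bigvee_{i=0}^{n-1}F^{-i}\mathcal P \mid \pi^{-1}\mathcal B_I\right)
\end{equation*}
is the relative (fibre) entropy, the supremum running over finite measurable partitions $\mathcal P$ of $Q$. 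The content of the lemma then reduces to showing that this relative entropy vanishes, which is exactly the point at which the uniform contraction of $F$ on the leaves is used.

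To prove $h_\mu(F\mid\pi^{-1}\mathcal B_I)=0$ I would first reduce to grid partitions. Let $\mathcal P_k$ be the partition of $Q$ into squares of side $2^{-k}$; then $\mathcal P_k$ increases to the Borel $\sigma$-algebra of $Q$, so by the usual approximation lemma $h_\mu(F\mid\pi^{-1}\mathcal B_I)=\sup_k h_\mu(F,\mathcal P_k\mid\pi^{-1}\mathcal B_I)$, and it suffices to bound $h_\mu(F,\mathcal P\mid\pi^{-1}\mathcal B_I)$ for a single grid partition $\mathcal P$ of mesh $\eta=2^{-k}$. Since conditioning on $\pi^{-1}\mathcal B_I$ coincides with conditioning on the measurable partition of $Q$ into leaves, whose conditional measures are the $\mu_{\gamma_x}$ of $(\ref{dis})$, we have
\begin{equation*}
H_\mu\left(\bigvee_{i=0}^{n-1}F^{-i}\mathcal P \mid \pi^{-1}\mathcal B_I\right)=\int_I H_{\mu_{\gamma_x}}\left(\Big(\bigvee_{i=0}^{n-1}F^{-i}\mathcal P\Big)\big|_{\gamma_x}\right)d\mu_x(x),
\end{equation*}
and it remains only to estimate, uniformly in $x$, how many atoms $\bigvee_{i=0}^{n-1}F^{-i}\mathcal P$ cuts on a single leaf $\gamma_x$.

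This is the geometric heart of the argument. For $\mu_x$-a.e.\ $x$ the map $F^{i}$ carries $\gamma_x$ into $\gamma_{T^{i}x}$ by a strictly monotone homeomorphism onto a subinterval of length at most $\lambda^{i}$ --- monotone because $|\partial G/\partial y|>0$, and of length $\le\lambda^{i}$ because each application of $F$ contracts leaves by $\lambda$. Let $i_0$ be the least integer with $\lambda^{i_0}\le\eta$. For $i\ge i_0$ the set $F^{i}(\gamma_x)$ is an interval of length $\le\eta$ lying in a single column of the grid, so it meets at most two atoms of $\mathcal P$; pulling back through the monotone homeomorphism $F^{i}|_{\gamma_x}$, the partition $F^{-i}\mathcal P|_{\gamma_x}$ is a partition of $\gamma_x$ into at most two subintervals. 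A join of $m$ partitions of an interval, each into at most two subintervals, is a partition into at most $m+1$ subintervals; and $\bigvee_{i=0}^{i_0-1}F^{-i}\mathcal P|_{\gamma_x}$ has at most $(\#\mathcal P)^{i_0}$ atoms. Hence $(\bigvee_{i=0}^{n-1}F^{-i}\mathcal P)|_{\gamma_x}$ has at most $(\#\mathcal P)^{i_0}(n-i_0+1)$ atoms for every $x$, so the integrand above is at most $i_0\log(\#\mathcal P)+\log(n-i_0+1)$ and
\begin{equation*}
h_\mu(F,\mathcal P\mid\pi^{-1}\mathcal B_I)=\lim_{n\to\infty}\frac1n\,H_\mu\left(\bigvee_{i=0}^{n-1}F^{-i}\mathcal P \mid \pi^{-1}\mathcal B_I\right)=0.
\end{equation*}
Taking the supremum over $k$ gives $h_\mu(F\mid\pi^{-1}\mathcal B_I)=0$, and with the Abramov--Rokhlin formula this yields $h_\mu(F)=h_{\mu_x}(T)$.

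I expect the difficulty to be bookkeeping rather than conceptual. The two points needing care are: (i) the reduction of the supremum over all finite partitions to the generating grid partitions $\mathcal P_k$, so that no irregular partition can secretly carry fibre entropy; and (ii) the combinatorial count showing that, because the leaf maps contract uniformly, once $\lambda^{i}$ drops below the mesh the pulled-back partition $F^{-i}\mathcal P|_{\gamma_x}$ has a uniformly bounded number of atoms, whence the number of atoms cut on a leaf by $\bigvee_{i=0}^{n-1}F^{-i}\mathcal P$ grows only linearly in $n$. One should also record the standard conditions under which the Abramov--Rokhlin formula holds --- a probability-preserving extension of Lebesgue spaces, which is the present situation, with $h_{\mu_x}(T)<\infty$ (which in the setting of Theorem \ref{th:Steinberg} follows from $\int\psi\,d\mu<\infty$ via Rokhlin's entropy formula). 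The computability hypothesis in the statement plays no role in the argument beyond guaranteeing that $\mu$ and $\mu_x$ are the well-defined objects under discussion.
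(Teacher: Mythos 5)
Your argument is correct in outline, but it takes a genuinely different route from the paper. The paper does not touch relative entropy at all: it proves the nontrivial inequality $h_{\mu}(F)\leq h_{\mu_{x}}(T)$ through the equivalence between metric entropy and orbit complexity for computable systems established in \cite{GHR07}, showing that an approximate $F$-orbit of a typical point can be recursively reconstructed, at a fixed precision $K$, from an approximate $T$-orbit of its projection plus a bounded amount of extra information, the vertical $\lambda$-contraction being what keeps the reconstruction error from growing. This is why the computability hypothesis appears in the statement; in your approach it is indeed vacuous, as you observe. Your route instead treats $(I,T,\mu_{x})$ as a factor, invokes the relative-entropy addition formula $h_{\mu}(F)=h_{\mu_{x}}(T)+h_{\mu}(F\mid\pi^{-1}\mathcal{B}_{I})$, and kills the fibre term by the leaf-counting argument (once $\lambda^{i}$ drops below the mesh, each $F^{-i}\mathcal{P}$ cuts a leaf into at most two intervals, so the number of atoms on a leaf grows linearly in $n$); that count, and the disintegration identity for conditional entropy, are fine. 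What your version buys is a purely ergodic-theoretic proof with no computability assumption, hence a strictly more general lemma; what it costs is that the inequality you need, $h_{\mu}(F)\leq h_{\mu_{x}}(T)+h_{\mu}(F\mid\pi^{-1}\mathcal{B}_{I})$, is exactly the nontrivial direction of the Abramov--Rokhlin/Ledrappier--Walters theorem (conditioning on $\pi^{-1}\mathcal{B}_{I}$ is finer than conditioning on $\bigvee_{i<n}F^{-i}\pi^{-1}\mathcal{Q}$, so it does not follow from the elementary join identity), and since the base $T$ here is non-invertible you should cite a version valid in that generality (e.g.\ Bogensch\"utz--Crauel, or pass to natural extensions, under which both entropies and the fibre entropy are unchanged), together with the conditional Kolmogorov--Sinai step you use to reduce to grid partitions. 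With those references made explicit, and the finiteness of $h_{\mu_{x}}(T)$ recorded as you do, your proof stands as a valid alternative to the paper's sketch.
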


\begin{proof}
(sketch) We will use the equivalence between entropy and orbit complexity in
computable systems (\cite{GHR07}). Since $h_{\mu }(F)\geq h_{\mu _{x}}(T)$
is trivial, we only have to prove the opposite inequality. What we are going
to do is to show that from an approximate orbit for $T$ and a finite
quantity of information, one can recover (recursively) an approximated orbit
for $F$.

We claim that, for most initial conditions $x$, starting from an $r$
approximation $p_{1},\ldots ,p_{n}\in \mathbb{Q}$ for the $T$ orbit of $\pi
_{1}(x)$ (by $r$ approximation we mean that $T^{i}(\pi _{1}(x))\in
B(p_{i},r) $, we recall that we take the sup norm on $\mathbb{R}^{2}$) we
can recover a $K$ approximation $x_{1},...,x_{n}$ for the orbit of $x$ by $F$
(hence $F^{i}(x)\in B(x_{i},K)$) for some $K$ not depending on $n$. Let us
denote the rectangle with edges $r,r^{\prime }$and center $x$ by $%
B(r,r^{\prime },x) $. Let us consider 
\begin{equation*}
C=\underset{x_{1},x_{2}\in B(p_{i},r)}{\sup }\frac{|G(x_{1},y)-G(x_{2},y)|}{%
|x_{1}-x_{2}|}.
\end{equation*}%
By Item \ref{it:item_2}, this is bounded.

Let us describe how to find the sequence $x_{i}$ by $p_{i}$ inductively.
Suppose we have found $x_{i}$, such that $\pi _{1}x_{i}=p_{i}$. Let us
suppose $r$ is so small that $\lambda r^{\prime }+Cr\leq r^{\prime }$. Let $%
K=\max (r,\lambda r^{\prime }+Cr)$; by the contraction in the vertical
direction%
\begin{equation*}
F(B(r^{\prime },r,x_{i}))\cap \pi _{1}^{-1}(B(p_{i+1},r))\subseteq B(\lambda
r^{\prime }+Cr,r,x_{i+1})\subset B(x_{i+1},K).
\end{equation*}%
for some $x_{i+1}$ such that $\pi _{1}x_{i+1}=p_{i+1}$. And if $F$ is
computable, such an $x_{i+1}$ can be computed by the knowledge of $x_{i}$, $%
p_{i}$, $F$, $r$, $r^{\prime }$.

Remark that if $r$ is as above then $\ F(B(r^{\prime },r,x_{i}))\cap \pi
_{1}^{-1}(B(p_{i+1},r))\subseteq B(r,r^{\prime },x_{i+1})$ and we can
continue the process. Hence by the computability of the map, knowing $x$ at
a precision $r$ (to start the process) and $p_{1},\ldots ,p_{n}$ \ we can
recover suitable $x_{1},\ldots ,x_{n}$ at a precision $K$ (by some
algorithm, up to any accuracy).

This shows that from an encoding of $p_{1},\ldots,p_{n}$ and a fixed
quantity of information, one can recover a description of the orbit of $x$
at a precision $K$. By this the orbit complexity of typical orbits in $%
(I^{2},F)$ is less or equal than the one in $(I,T)$ and if everything is
computable, these are equal to the respective entropies (see \cite{GHR07}).
Thus $h_{\mu }(F)\leq h_{\mu _{x}}(T)$.
\end{proof}

\begin{remark}
By the above lemma and $h_{\mu }(F)=\int \psi ~d\mu $ then it follows that:%
\begin{equation*}
d_{\mu }(x,y)=1+\frac{\int \psi ~d\mu }{\int \varphi ~d\mu }.
\end{equation*}
\end{remark}

\section{Implementation of the algorithms \label{8}}

Here we briefly discuss some numerical issue arising in the implementation
of the algorithms and the choices we made to optimize it.

\subsection{Reducing the number of elements of the partition}

\label{subsec:rid}

Our goal is to compute a Ulam like approximation of the $2$-dimensional map.
Since, as noticed in the introduction, the complexity of the problem and the
number of cells involved in the discretization, grows quadratically and hence too fast if we
consider the whole square. The idea is to restrict the dynamics to a
suitable invariant set containing the attractor.

We remark that, since the image of the first iteration of the map $F(Q)$ is
again invariant for the dynamics, we can restrict to the dynamics on some
suitable set containing it (i.e. to the elements of the partition that
intersect the image of the map) and compute the Ulam approximation of this
restricted map. As a matter of fact, we could take an higher iteration of
the map and narrow even more the size of the chosen starting region.

Therefore we have to find rigorously a subset of the indexes, such that the
union of the elements of the partition with indexes in this subset contains
the attractor. To do so we use the containment property of interval
arithmetics; if $\tilde{F}$ is the interval extension of $F$ and $R$ is a
rectangle in the continuity domain, then $\tilde{F}(R)$ is a rectangle such
that $F(R)\subset\tilde{F}(R)$.

We want to compute rigorously a subset that contains the image of the map;
at the same time we would like this set to be ``small''. We divide each of the
continuity domains along the $x$ in $k$ homogeneous pieces and, at the same
time we partition homogeneously in the $y$ direction, constructing a
partition $\mathcal{P}$, whose elements we denote by $R$, which is coarser 
than the Ulam partition. 

To compute the indexes of the elements of the Ulam partition that intersect
$F(Q)$, for each rectangle $R\in\mathcal{P}$ we take $\tilde{F}(R)$ and mark the indexes
whose intersection with $\tilde{F}(R)$ is non empty.
Doing so we obtain a subset of the indexes which is guaranteed to contain the image of
the map and, therefore, the attractor. 

This reduces dramatically the size of
the problem; in our example with a size of $16384\times 1024$ this permits
us to reduce the number of coefficients involved in the computation from $2^{24}=16384\times 1024 $ to $351198$.

\subsection{Computing the Ulam matrix}

To compute with a given precision the coefficients of the Ulam approximation we have to compute $%
P_{ij}:=m(F^{-1}R_{j}\cap R_{i})/m(R_{i})$; what we do is to find a
piecewise linear approximation for the preimage of $R_{j}$. We remark that
the preimages of the vertical sides of $R_{j}$ are vertical lines, due to
the fact that the map preserves the vertical foliation, while the preimage
of the horizontal sides are lines which are graphs of functions $\phi (x)$.

We approximate $F^{-1}R_{j}$ by a polygon $\tilde{P_{j}}$ and compute the
area of the intersection between the polygon and $R_{i}$ with a precribed
bound on the error.

We present a drawing to illustrate our ideas: figure \ref{fig:computing}. In
the figure the intersection of the rectangle and the preimage, represented by the darkest region, 
 is the region whose area we want to compute.

Denote by $R_{j,l}$ and $R_{j,u}$ the quotes of the lower ad upper
sides of $R_j$; inside a continuity domain we can apply the implicit
function theorem and we know that there exists $\phi_l$ and $\phi_u$ such
that 
\begin{equation*}
G(x,\phi_l(x))=R_{j,l}\quad G(x,\phi_u(x))=R_{j,u}.
\end{equation*}
Computing $P_{ij}$ is nothing else that computing the area of the
intersection between $R_i$ and the area between the graphs of $\phi_u$ and $%
\phi_l$, i.e., computing rigorously the difference between the integrals of
the two functions 
\begin{equation*}
\chi_u=\min\{\phi_u,R_{i,u}\}, \quad \chi_l=\max\{\phi_l,R_{i,l}\},
\end{equation*}
over the interval $R_i\cap T^{-1}(\pi_x(R_j))$, where $T$ is the one
dimensional map and $\pi_x$ is the projection on the $x$ coordinate.

We explain some of the ideas involved in the computation of the integral of $%
\chi_u$; the procedure and the errors for $\chi_l$ follow from the same
reasonings.

The main idea consists in approximating $\phi_u$ by a polygonal and estimate
the error made in computing the area below its graph and its intersection
point with the quote $R_{i,u}$.

In figure \ref{fig:computing}, $\phi _{l}$ is the preimage of the lower side
of $R_{j}$, $\phi _{u}$ is the preimage of the upper side and $\tilde{\phi}%
_{u}$ is the approximated preimage of the upper side (with four vertices).

From straightforward estimates it is possible to see that the error made
using the polygonal approximation $\tilde{\phi}_u$ when we compute the
integral below the graph of $\phi_u$ depends on the second derivative of $%
\phi_u$, while the error made on computing the intersection between $\phi_u$
and $R_{i,u}$ depends on the distortion of $\phi_u$.

Please note that $\phi _{u}^{\prime \prime }$ and the distortion of $\phi
_{u}$ go to infinity near the discontinuity lines; at the same time, in our
example the contraction along the $y$-direction is strongest near the
discontinuity lines. Therefore, if the discretization is fine enough, \emph{%
for rectangles near the discontinuity lines}, the image of $R_{i}$ is
strictly contained between two quotes; this implies that in this specific
case 
\begin{equation*}
\chi _{u}\equiv R_{i,u}\quad \chi _{l}\equiv R_{i,l},
\end{equation*}%
therefore 
\begin{align*}
P_{ij}&=\frac{m(F^{-1}R_{j}\cap R_{i})}{m(R_{i})}=\frac{\int_{T^{-1}(%
\pi_1(R_j))}\chi_u-\chi_l dm}{m(R_{i})}=\frac{\int_{T^{-1}(\pi_1(R_j))}
1/\delta^{\prime }dm}{m(R_{i})} \\
&=\frac{\int_{T^{-1}(\pi_1(R_j))} 1 dm}{m(\pi_1^{-1}R_{i})}=\frac{%
m(T^{-1}(\pi_1(R_j))\cap \pi_1(R_i))}{m(\pi_1(R_j))},
\end{align*}
i.e. in these particular cases, the coefficients depend only on what happens
along the $x$-direction.

\begin{figure}[tbp]
\begin{center}
\def\svgwidth{80mm}
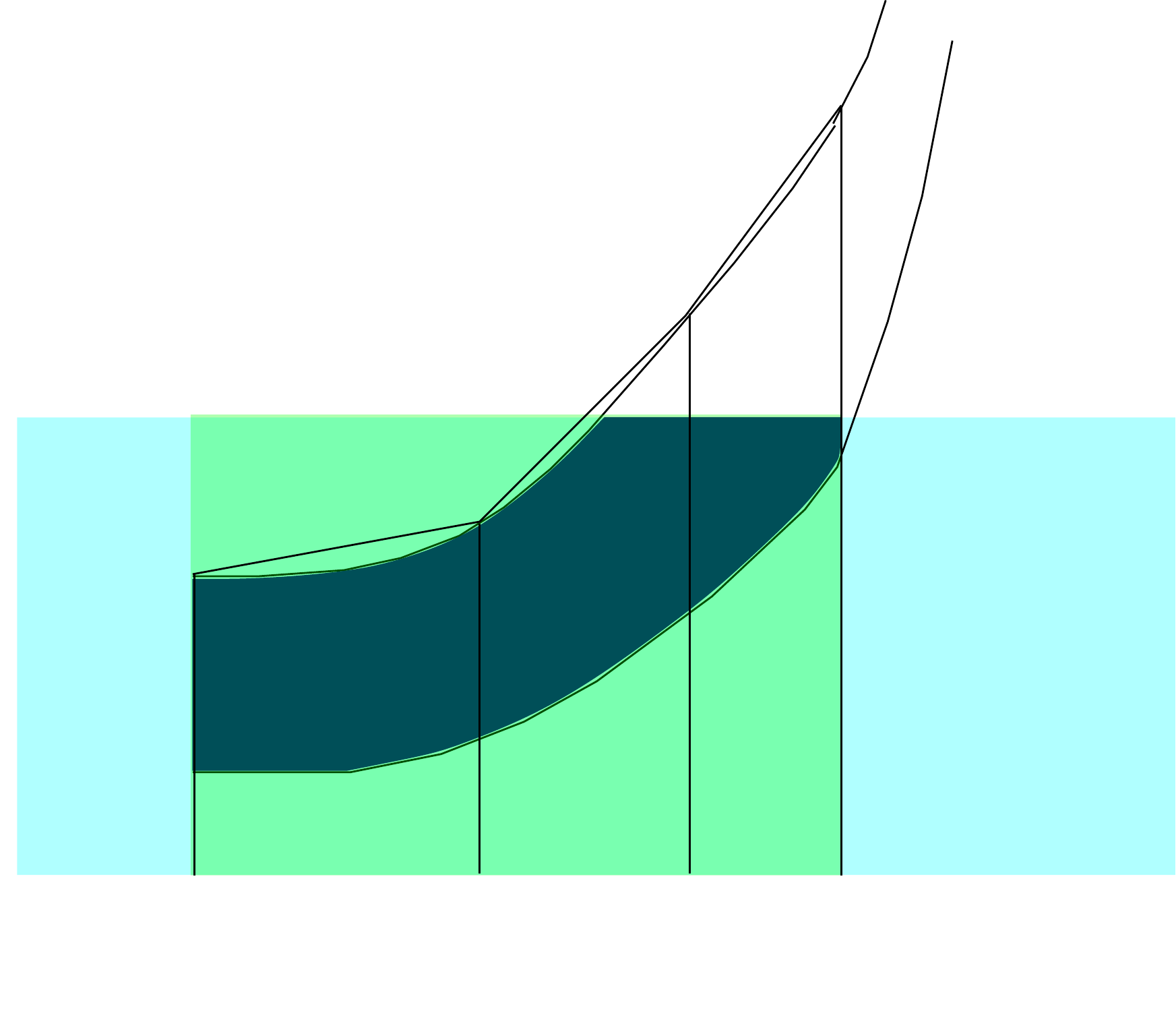
\end{center}
\caption{The rectangle $R_i$, the preimage $F^{-1} R_j$ with its upper
boundary $\protect\phi_u$ and its lower boundary $\protect\phi_l$ and the
linearization of the upper boundary $\tilde{\protect\phi}_u$.}
\label{fig:computing}
\end{figure}

\section{Numerical experiments}

In this section we show the results of a rigorous computation on a Lorenz-like map $F$ . 
The C++ codes and the numerical data are available at:  

\centerline{http://im.ufrj.br/~nisoli/CompInvMeasLor2D}

\subsection{Our example}

\label{sec:victim} In our experiments we analize the fourth iterate (i.e. $%
F^{4}:Q\rightarrow Q$) of the following two dimensional Lorenz map 
\begin{equation*}
F(x,y)=(T(x),G(x,y))
\end{equation*}%
with 
\begin{equation}
T(x)=\left\{ 
\begin{array}{cc}
\theta |x-1/2|^{\alpha } & 0\leq x<1/2 \\ 
1-\theta |x-1/2|^{\alpha } & \frac{1}{2}<x\leq 1%
\end{array}%
\right.  \label{eq:lorenz}
\end{equation}%
with constants $\alpha =51/64$, $\theta =109/64$, and 
\begin{equation}
G(x,y)=\left\{ 
\begin{array}{cc}
(y-1/2)|x-1/2|^{\beta }+1/4 & 0\leq x<1/2 \\ 
(y-1/2)|x-1/2|^{\beta }+3/4 & \frac{1}{2}<x\leq 1%
\end{array}%
\right.  \label{eq:lorenz_y}
\end{equation}%
with $\beta =396/256$.

The graph of $T$ is plotted in figure \ref{fig:lorenz_func}. In subsection %
\ref{sec:Lorenz_1D} we give the results of the rigorous computation of the
density plotted in figure \ref{fig:lorenz_dens}.

\begin{figure}[!h]
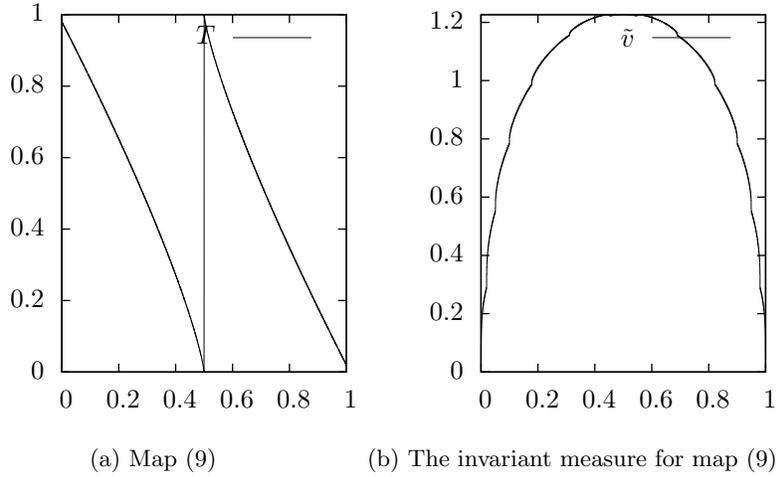

\begin{subfigure}[b]{0.45\textwidth}
      \input{./lorenz_func.tex}
      \caption{Map \eqref{eq:lorenz}}
      \label{fig:lorenz_func}
  \end{subfigure}  
\begin{subfigure}[b]{0.45\textwidth}
    \input{./lorenz_dens.tex}
    \caption{The invariant measure for map \eqref{eq:lorenz}}
    \label{fig:lorenz_dens}
  \end{subfigure}
\caption{The one dimensional map: \eqref{eq:lorenz}.}
\end{figure}

\subsection{The Lorenz $1$-dimensional map}

\label{sec:Lorenz_1D}

The first step in our algorithm is the approximation of the invariant
measure of the one dimensional induced map. The algoritm we use is the one
described in \cite{GN} with the estimations described in Section \ref{1d}.

We consider the fourth iterate $T^4$ of  the Lorenz $1$-dimensional map $T$ given by Equation \ref{eq:lorenz} with $\alpha=51/64$ and $\theta=109/64$ and estimate the coefficients of its Lasota Yorke inequality (see Section \ref{1d} ). We have that 
\begin{equation*}
\frac{2}{\min (d_{i}-d_{i+1})}\leq 37.8247.
\end{equation*}

Moreover, $\sup |1/T^{\prime }|\leq 0.16$, we fix $l=30$ obtaining that 
\begin{equation*}
\frac{1}{2}\int_{I_{l}}|\frac{T^{\prime \prime }}{(T^{\prime })^{2}}|\leq
0.46
\end{equation*}
and that $\lambda_1  \leq 0.763$.

We have therefore that the second coefficient of the Lasota Yorke inequality is  $B\leq 285.053$.
From the experiments, on a partition of $1048576$ elements, with a matrix such that
each component was computed with an error of $2^{-43}$ we have that the number of iterations needed to contract the zero average space is  $N=8$.
  
Therefore the rigorous error on the computation of the one dimensional
measure is:\footnote{The additional parameters which are involved in this computation (see \cite{GN} for the meaning) are $B^{\prime }\leq 67.83$,  $N_{\varepsilon}=7$,  and the number of iterates necessary to contract
the unit simplex to a diameter of $0.0001$ was $10$ (i.e., the numerical accuracy
with which we know the eigenvector for the matrix).
Therefore the rigorous error  is estimated as: 
\begin{equation*}
||f-\tilde{v}||_{L^1}\leq \frac{2\cdot 8\cdot 285.053}{1048576}+2\cdot 7\cdot
1048576\cdot 2^{-43}+0.0001\leq 0.005.
\end{equation*}
}
  
\begin{equation*}
||f-\tilde{v}||_{L^1}\leq 0.005.
\end{equation*}

\subsection{Estimating the measure for the Lorenz $2$-dimensional map}

\label{meas2d} In our numerical experiments we used a partition of the
domain of size $\delta =1/16384=2^{-14}$ elements in the $x$ direction and
of $\delta ^{\prime }=2^{-10}$ in the $y$-direction and reduced the number of elements we consider as explained in
\ref{subsec:rid}. 

We computed the Ulam discretization of the fourth iterate of the Lorenz $2$%
-dimensional map; using the method explained in subsection \ref{subsec:rid}
our program needed to compute approximatively $351198$ cofficients of the
matrix.

As explained in Subsection \ref{sec:Lorenz_1D} the one dimensional map
satisfies a Lasota Yorke  inequality with coefficients $\lambda _{1}\leq 0.763$, $B\leq
285.53$ and we have a computed approximated density on a partition of size $%
\xi=1/1048576=2^{-20}$ such that $||f-f_{\xi}||_{L^1}\leq 0.005$.

To estimate $||f_{\delta}||_{BV},$ as required in Proposition \ref{2} we use the
upper bound (Lemma \ref{Lemma:BV}) 
\begin{equation*}
||f_{\delta}||_{BV}\leq \text{Var}(f_{\delta})+2||f_{\delta}||_{L^1},
\end{equation*}%
which gave, constructing $f_{\delta}$ by averaging $f_{\xi}$ on the coarser
partition that 
\begin{equation*}
||f_{\delta}||_{BV}<4.37,\quad ||f_{\xi}||_{BV}<4.46,\quad
||f_{\xi}-f_{\delta}||_{L^1}\leq \frac{4.46}{16384}\leq 0.0003.
\end{equation*}

To apply Proposition \ref{2} and Remark \ref{r6}  we compute that in our example $\lambda \leq 0.014$, and we chose to take intervals of size $ 2/1048576$ near the discontinuity points to estimate $l$ and $\bar{L}$,
obtaining respectively that $l<3.2\cdot 10^{-5}$ and $\bar{L}<1277$.

Since 
\begin{align*}
||L_{T}f_{\delta }-f_{\delta }||_{L^1}\leq ||L_{T}f_{\delta }-L_{T,\xi
}f_{\delta }||_{L^1}+||L_{T,\xi }f_{\delta }-f_{\delta }||_{L^1},
\end{align*}%
then 
\begin{equation*}
||L_{T}f_{\delta }-f_{\delta }||_{L^1}\leq 0.00034.
\end{equation*}

Let $\mu_0$ be as defined in \eqref{eq:mu_0}. We apply Theorem \ref{uno} to
estimate the distance $W(\overline{\mu },L_{F}^{3}\mu _{0})$ after $3$
iterations, since $\lambda ^{3}\leq 3\cdot 10^{-6}$.

Let us consider the error estimate; let $\overline{\mu }$, with marginal $f$%
, be the physical invariant measure for $F$: 
\begin{equation*}
W(\overline{\mu },L_{F}^{3}\mu _{0})=W(L_{F}^{3}\overline{\mu },L_{F}^{3}\mu
_{0})\leq ||f-f_{\delta}||_{L^1}+\lambda ^{3}\leq
||f-f_{\xi}||_{L^1}+||f_{\xi}-f_{\delta}||_{L^1}+\lambda ^{3};
\end{equation*}%
therefore: 
\begin{equation*}
W(\overline{\mu },L_{F}^{3}\mu _{0})\leq 0.0054.
\end{equation*}

Now, we need to take into account the fact that we are iterating an
approximated operator; referring to Proposition \ref{2} and looking at the data of
our problem, we can see that, when taking the minimum, is always the second
member that is chosen. Then, the explicit formula is: 
\begin{align*}
W(L_{F}^{n}\mu_0 ,L_{\delta }^{n}\mu_0 )
\leq n\cdot \frac{2\delta ^{\prime }}{1-\lambda }+\delta+\sum_{i=1}^{n-1}%
\bigg(\lambda ^{n-i}+||L_{T}f_{\delta}-f_{\delta}||_{L^1}\bigg).
\end{align*}

Therefore 
\begin{align*}
W(L_F^3 \mu_0,L^3_{\delta}\mu_0)&\leq 3\cdot \frac{2\delta^{\prime }}{%
1-\lambda}+\delta+2\cdot ||L_T f_{\delta}-f_{\delta}||_{L^1}+\sum_{i=1}^2
\lambda^{3-i} \\
&\leq 3\cdot 0.002+0.00007+0.00068+0.015\leq 0.022
\end{align*}
and 
\begin{equation*}
W(\overline{\mu},L^3_{\delta} \mu_0)\leq 0.028.
\end{equation*}

In figure \ref{fig:dentity_2d} we present an image of the computed density,
on the partition $16384\times 1024$.

\begin{figure}[tbp]
\begin{center}
\includegraphics[width=80mm]{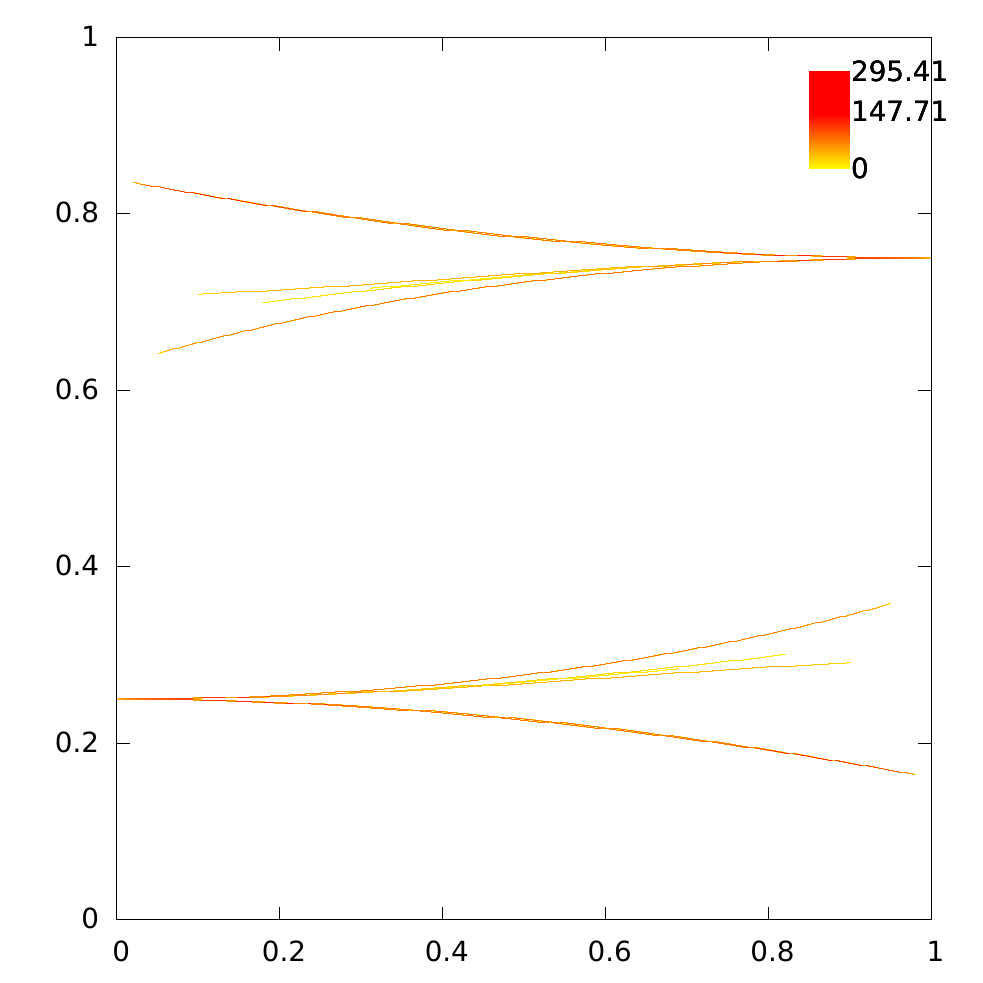}
\end{center}
\caption{Density on a grid $16384\times 1024$}
\label{fig:dentity_2d}
\end{figure}

\section{Estimating the dimension}

\label{dimension} Here we use the results explained in Section \ref{dim} to
rigorously approximate the dimension of the above computed invariant measure.

Inspecting \eqref{eq:lorenz_y}, it is possible to see that in our case we
have that $\partial_y G(x,y)$ is constant along the fibers. More explictly,
by \eqref{eq:lorenz},\eqref{eq:lorenz_y} we have that: 
\begin{equation*}
\log(|\partial_x T(x)|)=\log(\theta)+\log(\alpha)+(\alpha-1)\log |x-1/2|,
\end{equation*}
\begin{equation*}
\quad \log(|\partial_y G(x,y)|)=\beta\log|x-1/2|
\end{equation*}
Therefore, if $\mu$ is the invariant measure for the Lorenz $2$-dimensional
map, to estimate the dimension, we have to estimate 
\begin{equation*}
\int_0^1 \log |x-1/2| d\mu_x,
\end{equation*}
where $d\mu_x$ has density $f$.

On one side, the function $\log |x-1/2|$ is unbounded, on the other side, we
only know an approximation of the density, that we denote by $f_{\delta}$.
Let us estimate from above and from below of the integral.

To give the estimate from above, we take a small interval $%
(1/2-\epsilon_1,1/2+\epsilon_1)$ and we define a new function 
\begin{equation}
\tilde{\psi_1}(x)=\left\{ 
\begin{array}{cc}
\log|x-1/2| & x \in [0,1]\setminus (1/2-\epsilon_1,1/2+\epsilon_1) \\ 
\log|\epsilon_1| & x\in (1/2-\epsilon_1,1/2+\epsilon_1)%
\end{array}%
\right.
\end{equation}

Therefore we have: 
\begin{equation*}
\int_0^1 \log|x-1/2|df\leq\int_0^1 \tilde{\psi_1} df_{\delta}+|\log(%
\epsilon_1)| \cdot ||f-f_{\delta}||_{L^1}.
\end{equation*}

Now, we want to estimate the integral from below; the idea is again to split
the integral in two parts. By the Lasota-Yorke inequality we know that the
BV norm of $f$ is limited from above by  the second coefficient  of the Lasota Yorke inequality $B$; therefore we have that $%
||f||_{\infty}\leq B$.

Again, we take a small interval $(1/2-\epsilon_2,1/2+\epsilon_2)$. We have
that 
\begin{equation*}
\int_{1/2-\epsilon_2}^{1/2}\log|x-1/2|df\geq B
\int_{1/2-\epsilon_2}^{1/2}\log|x-1/2|dx,
\end{equation*}
where $dx$ is the Lebesgue measure on the interval $[0,1]$. Therefore 
\begin{equation*}
\int_{1/2-\epsilon_2}^{1/2}\log|x-1/2|df\geq B \epsilon_2
(\log(\epsilon_2)-1).
\end{equation*}
Let 
\begin{equation}
\tilde{\psi_2}(x)=\left\{ 
\begin{array}{cc}
\log|x-1/2| & x \in [0,1]\setminus (1/2-\epsilon_2,1/2+\epsilon_2) \\ 
0 & x\in (1/2-\epsilon_2,1/2+\epsilon_2)%
\end{array}%
\right.
\end{equation}
Then we have that: 
\begin{equation*}
\int_0^1 \log |x-1/2|  df\geq \int_0^1 \psi_2
df_{\delta}-|\log(\epsilon_2)|\cdot ||f-f_{\delta}||_{L^1}-2 B \epsilon_2
|\log(\epsilon_2)-1|.
\end{equation*}

Using the computed invariant measure we have the following proposition.

\begin{theorem}
The dimension of the physical invariant measure for the map described in
Section \ref{sec:victim} lies in the interval $[1.24063,1.24129]$.
\end{theorem}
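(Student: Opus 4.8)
The plan is to reduce the statement to a rigorous two–sided bound on the single number $K:=\int_{0}^{1}\log|x-1/2|\,d\mu_{x}$, where $\mu_{x}$ (density $f$) is the marginal on the expanding direction of the physical measure of the map of Section~\ref{sec:victim}, and then to feed that bound into Steinberger's formula. Concretely, by Theorem~\ref{th:Steinberg} together with the lemma and remark of Section~\ref{dim} (which give $h_{\mu}(F)=h_{\mu_{x}}(T)=\int\psi\,d\mu$ and hence $d_{\mu}=1+\int\psi\,d\mu/\int\varphi\,d\mu$), and using the explicit forms $\psi(x,y)=\log\theta+\log\alpha+(\alpha-1)\log|x-1/2|$ and $\varphi(x,y)=-\beta\log|x-1/2|$ read off from \eqref{eq:lorenz}--\eqref{eq:lorenz_y}, one obtains
\[
d_{\mu}\;=\;1+\frac{\log(\theta\alpha)+(\alpha-1)K}{-\beta K}.
\]
It is irrelevant here whether one works with $F$ or with the fourth iterate $F^{4}$ actually computed, since passing to $F^{4}$ multiplies both $\int\psi\,d\mu$ and $\int\varphi\,d\mu$ by $4$. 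So everything comes down to bracketing $K$, using the rigorously computed one–dimensional density $f_{\delta}$, for which Section~\ref{sec:Lorenz_1D} supplies $\|f-f_{\delta}\|_{L^{1}}\le 0.005$ (plus the small averaging error), and the Lasota--Yorke a priori bound $\|f\|_{\infty}\le\|f\|_{BV}\le B$ with $B\le 285.053$.

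For the \emph{upper} bound on $K$ I would use the truncation $\tilde{\psi_{1}}$ that agrees with $\log|x-1/2|$ off the interval $(1/2-\epsilon_{1},1/2+\epsilon_{1})$ and is clamped to $\log\epsilon_{1}$ on it. Since $\tilde{\psi_{1}}\ge\log|x-1/2|$ everywhere and $\|\tilde{\psi_{1}}\|_{\infty}=|\log\epsilon_{1}|$, we get $K\le\int_{0}^{1}\tilde{\psi_{1}}\,df_{\delta}+|\log\epsilon_{1}|\,\|f-f_{\delta}\|_{L^{1}}$, and the first integral is evaluated rigorously by interval arithmetic because $f_{\delta}$ is an explicit step function.

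For the \emph{lower} bound I would split the integral at the singularity. Off $(1/2-\epsilon_{2},1/2+\epsilon_{2})$, use the truncation $\tilde{\psi_{2}}$ (equal to $0$ on that interval), giving $\int\tilde{\psi_{2}}\,df\ge\int\tilde{\psi_{2}}\,df_{\delta}-|\log\epsilon_{2}|\,\|f-f_{\delta}\|_{L^{1}}$; and on $(1/2-\epsilon_{2},1/2+\epsilon_{2})$, where $\log|x-1/2|<0$ so that $\log|x-1/2|\,f(x)\ge B\log|x-1/2|$, bound $\int\log|x-1/2|\,df\ge B\int_{1/2-\epsilon_{2}}^{1/2+\epsilon_{2}}\log|x-1/2|\,dm=2B\epsilon_{2}(\log\epsilon_{2}-1)$. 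Summing the two contributions yields $K\ge\int_{0}^{1}\tilde{\psi_{2}}\,df_{\delta}-|\log\epsilon_{2}|\,\|f-f_{\delta}\|_{L^{1}}-2B\epsilon_{2}|\log\epsilon_{2}-1|$.

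Finally I would choose $\epsilon_{1},\epsilon_{2}$ small enough that the error terms are negligible against the target precision, compute the two integrals $\int\tilde{\psi_{i}}\,df_{\delta}$ in interval arithmetic, and substitute the resulting rigorous interval for $K$ into the displayed formula for $d_{\mu}$, evaluating that rational expression (monotone in $K$ on the relevant range) with interval arithmetic to land in $[1.24063,1.24129]$. The main obstacle is exactly the logarithmic singularity of $\log|x-1/2|$ at $x=1/2$: since the density is controlled only in $L^{1}$, the naive estimate $|\int\log|x-1/2|\,d(f-f_{\delta})|\le\|\log|x-1/2|\|_{\infty}\|f-f_{\delta}\|_{L^{1}}$ is useless; the truncations trade unboundedness for a $|\log\epsilon_{i}|$ factor, and the leftover tail near $x=1/2$ is controlled not through $f$ but through the a priori sup bound $\|f\|_{\infty}\le B$ from the Lasota--Yorke inequality. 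Keeping $|\log\epsilon_{1}|\,\|f-f_{\delta}\|_{L^{1}}$, $|\log\epsilon_{2}|\,\|f-f_{\delta}\|_{L^{1}}$ and $2B\epsilon_{2}|\log\epsilon_{2}-1|$ (the last with $B$ of order a few hundred) all simultaneously small enough to resolve an interval of width $\sim 10^{-3}$ in the dimension is the delicate quantitative point.
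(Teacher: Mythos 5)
Your proposal is correct and follows essentially the same route as the paper: reduce via Steinberger's theorem and the entropy lemma to bounding $K=\int_0^1\log|x-1/2|\,d\mu_x$, bracket $K$ using the two truncations $\tilde{\psi_1},\tilde{\psi_2}$ with errors $|\log\epsilon_i|\cdot\|f-f_\delta\|_{L^1}$ plus the tail term $2B\epsilon_2|\log\epsilon_2-1|$ controlled by the Lasota--Yorke bound $\|f\|_\infty\le B$, and then evaluate the resulting expression for $d_\mu$ rigorously. The only (harmless) additions are your explicit closed form $d_\mu=1+\frac{\log(\theta\alpha)+(\alpha-1)K}{-\beta K}$ and the observation that working with $F^4$ rather than $F$ is immaterial, both of which are implicit in the paper.
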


\begin{remark}
The high number of significative digits depends on the fact that, due to the properties of the chosen map, in this
estimate we are using only the one dimensional approximation of the measure $f_{\delta}$, which we know with high precision.
\end{remark}

\section{A non-rigorous dimension estimate}

As a control, we implemented a non-rigorous computation of the correlation dimension of
the attractor, following the classical approach described in \cite{Ka-Sc}. Let $%
\theta $ be the heavyside function, i.e., $\theta (x)=0$ if $x\leq 0$ and $%
\theta (x)=1$ if $x\geq 0$. Let $x$ be a point on the attractor and $%
x_{i}:=F^{i}(x)$, for $i=1,\ldots n$; define 
\begin{equation*}
C(\varepsilon )=\frac{2}{n(n-1)}\sum_{i=1}^{n}\sum_{j=i+1}^{n}\theta
(\varepsilon -d(x_{i},x_{j})),
\end{equation*}%
where $d(x,y)=\max (|x_{1}-y_{1}|,|x_{2}-y_{2}|)$ is the $\max $ distance;
in the following, we denote by $B_{\varepsilon }(x)$ the ball with respect
to the distance $d(x,y)$.
This permits us to define a non rigorous estimator for the local dimension
of $\mu$, the so called correlation dimension: 
\begin{equation*}
\tilde{d}_{\mu}:=\lim_{\varepsilon\to 0} \frac{\log C(\varepsilon)}{%
-\log(\varepsilon)}.
\end{equation*}

We implemented an algorithm that uses this idea and applied it to a non
rigorous experiment where we fixed a family of tresholds $%
\varepsilon_k=2^{-9-k}$ for $k=0,\ldots,18$ with an orbit (a pseudo orbit) of length $%
n=2097152$, and interpolated the results (in a log-log scale) with least
square methods. The linear coefficient of the interpolating line should be
an approximation of $\tilde{d}_{\mu}$.

The linear coefficient we obtain from our computations is $1.236$ which is
near our rigorous estimate of $[1.24063,1.24129]$.

\section{Appendix: computing the invariant measure of piecewise expanding
maps with infinite derivative\label{1d}}

\paragraph{Approximating fixed points and the invariant measures.}

In this section we see how to estimate the invariant measure of a one
dimensional piecewise expanding map to construct the starting measure for
our iterative method.

The method we used is the one explained in \cite{GN}. In that paper
piecewise expanding maps with finite derivative were considered, while here
the map has infinite derivative. We briefly explain the method and show the
estimation which allows to use it for the infinite derivative case.

In \cite{GN} the computation of invariant measures was faced by a fixed
point stability result. The transfer operator is approximated by a suitable
discretization (as the Ulam one described before) and the distance between
the fixed point of the original operator and the dicretized one is estimated
by the stability statement.

To use it we need some a priori estimation and some computation which is
done by the computer.

Let us introduce the fixed point stability statement we are going to use.

Let us consider a restriction of the transfer operator to an invariant
normed subspace (often a Banach space of regular measures) $\mathcal{%
B\subseteq }SPM(X)$ and let us still denote it by $L$:$\mathcal{B\rightarrow
B}$. Suppose it is possible to approximate $L$ in a suitable way by another
operator $L_{\delta }$ for which we can calculate fixed points and other
properties (as an example, the Ulam discretization with a grid of size $%
\delta $).

It is possible to exploit as much as possible the information contained in $%
L_{\delta }$ to approximate fixed points of $L$. Let us hence suppose that $%
f,$ $f_{\delta }\in \mathcal{B}$ are fixed points, respectively of $L$ and $%
L_{\delta }$.

\begin{theorem}[see \protect\cite{GN}]
\label{gen}\emph{Let } $V=\{\mu \in \mathcal{B}~s.t.~\mu (X)=0\}$\emph{.
Suppose:}

\begin{enumerate}
\item $||L_{\delta }f-Lf||_{\mathcal{B}}\leq \epsilon $\emph{\ }

\item $\exists \,N$\emph{\ such that }$\forall v\in V~,~||L_{\delta
}^{N}v||_{\mathcal{B}}\leq \frac{1}{2}||v||_{\mathcal{B}}$\emph{\ }

\item $L_{\delta }^{i}|_{V}$ \emph{is continuous, let }$C_{i}=\sup_{g\in V}%
\frac{||L_{\delta }^{i}g||_{\mathcal{B}}}{||g||_{\mathcal{B}}}$
\end{enumerate}

\emph{Then }%
\begin{equation}
||f_{\delta }-f||_{\mathcal{B}}\leq 2\epsilon \sum_{i\in \lbrack
0,N-1]}C_{i}.  \label{mainres}
\end{equation}
\end{theorem}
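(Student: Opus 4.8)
The plan is a standard fixed-point perturbation argument. First I would normalize both fixed points to be probability measures, so that $f_{\delta}-f$ lies in the zero-average space $V$; this is legitimate because $L$ and $L_{\delta}$ both preserve total mass, which also guarantees that the error term $L_{\delta}f-Lf$ belongs to $V$ and that $L_{\delta}$ maps $V$ into itself.

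The key algebraic step is the identity obtained by adding and subtracting $L_{\delta}f$:
\[
f_{\delta}-f = L_{\delta}f_{\delta}-Lf = L_{\delta}(f_{\delta}-f) + (L_{\delta}f - Lf).
\]
Writing $v := f_{\delta}-f \in V$ and $\eta := L_{\delta}f - Lf \in V$, this says $v = L_{\delta}v + \eta$. A one-line induction on this relation then gives the telescoping formula
\[
v = L_{\delta}^{N}v + \sum_{i=0}^{N-1} L_{\delta}^{i}\eta .
\]

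Next I would apply the norm $\|\cdot\|_{\mathcal{B}}$ to both sides. For the first summand, hypothesis~2 applied to $v\in V$ gives $\|L_{\delta}^{N}v\|_{\mathcal{B}}\leq \tfrac{1}{2}\|v\|_{\mathcal{B}}$. For the sum, since $\eta\in V$ and $L_{\delta}$ preserves $V$, each $L_{\delta}^{i}\eta$ lies in $V$, so hypothesis~3 yields $\|L_{\delta}^{i}\eta\|_{\mathcal{B}}\leq C_{i}\|\eta\|_{\mathcal{B}}$, and hypothesis~1 bounds $\|\eta\|_{\mathcal{B}}\leq\epsilon$. Combining,
\[
\|v\|_{\mathcal{B}} \leq \tfrac{1}{2}\|v\|_{\mathcal{B}} + \epsilon\sum_{i=0}^{N-1}C_{i},
\]
and subtracting $\tfrac{1}{2}\|v\|_{\mathcal{B}}$ from both sides gives exactly $\|f_{\delta}-f\|_{\mathcal{B}}\leq 2\epsilon\sum_{i\in[0,N-1]}C_{i}$.

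There is no serious obstacle here; the only point requiring care is the bookkeeping that $\eta$ and all its iterates $L_{\delta}^{i}\eta$ remain in $V$, since that is what licenses the use of the contraction factor $\tfrac{1}{2}$ from hypothesis~2 and of the operator norms $C_{i}$ from hypothesis~3. This in turn rests on mass-preservation of the transfer operator and of the Ulam discretization, which should be noted explicitly at the start of the proof.
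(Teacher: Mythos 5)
Your proof is correct and is essentially the standard argument from the cited reference \cite{GN} (the paper itself only quotes the theorem): the identity $v=L_{\delta}^{N}v+\sum_{i=0}^{N-1}L_{\delta}^{i}\eta$ is just a rearrangement of the usual telescoping of $L_{\delta}^{N}f-L^{N}f$ combined with the contraction hypothesis on $V$. Your attention to the fact that $v=f_{\delta}-f$ and $\eta=L_{\delta}f-Lf$ lie in $V$ (via equal normalization of the fixed points and mass preservation of $L$ and $L_{\delta}$) is exactly the bookkeeping needed, so nothing is missing.
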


To apply the theorem we need to estimate the quantities related to the
assumptions a), b), c).

Item a) can be obtained by the some approximation inequality showing that $%
L_{\delta }$ well approximates $L$ and an estimation for the norm of $f$
which can be recovered by the Lasota Yorke inequality.

In the following subsection we will prove a Lasota Yorke
inequality for the kind of maps we are interested in (explicitly estimating
its coefficients) involving the $L^{1}$ and bounded variation norm. This
allows to estimate $||f||_{BV}$.

We then estimate (see \cite{GN} Lemma 10) 
\begin{equation*}
||L_{\delta }f-Lf||_{L^1}\leq 2\delta ||f||_{BV}.
\end{equation*}

By this we complete the estimations needed for the first item.

About b), the required $N$ is obtained by the rate of contraction of $%
L_{\delta }$ on the space of zero average measures and will be computed
while running the algorithm by the computer (see \cite{GN} \ for the
details).

Item c) also depend on the definition of $L_{\delta }$; in the case of $%
L^{1} $ approximation with the Ulam method they can be bounded by $1$.

For more details on the implementation of the algorithm, see \cite{GN} .

\paragraph{Lasota Yorke inequality with infinite derivative\label{LY}}

In the following, we see the estimations which are needed to bound the
coefficients of the Lasota Yorke inequality when the map has infinite
derivative.

Let us consider a class of maps which are locally expanding but they can be
discontinuous at some point.

\begin{definition}
We call a nonsingular function $T:([0,1],m)\rightarrow ([0,1],m)$ piecewise
expanding if

\begin{itemize}
\item There is a finite set of points $d_{1}=0,d_{2},...,d_{n}=1$ such that $%
T|_{(d_{i},d_{i+1})}$ is $C^{2}$.

\item $\inf_{x\in \lbrack 0,1]}|D_{x}T|=\lambda ^{-1}>2$ on the set where it
is defined.
\end{itemize}
\end{definition}

Let us define a notion of bounded variation for measures: let 
\begin{equation*}
||\mu ||_{BV}=\underset{\phi \in C^{1},|\phi |_{\infty }=1}{\sup |\mu (\phi
^{\prime })|}
\end{equation*}%
this is related to the usual notion of bounded variation for densities%
\footnote{%
Recall that the variation of a function $g$ is defined as 
\begin{equation*}
\text{Var}(g)=\sup_{(x_{i})\in \text{Finite subdivisions of $[0,1]$}%
}\sum_{i\leq n}|g(x_{i})-g(x_{i+1})|.
\end{equation*}%
}: if $||\mu ||_{BV}<\infty $ then $\mu $ is absolutely  with bounded variation density (see \cite{L2}).

If $f$ is a $L^1$ density, by a small abuse of notation, let us identify it with the associated measure. The following relates the above defined norm with the usual notion of variation

\begin{lemma}
\label{Lemma:BV} Let $f$ be a bounded variation density, then 
\begin{equation*}
||f||_{BV}\leq \text{Var}(f)+2 ||f||_{L^1}
\end{equation*}
\end{lemma}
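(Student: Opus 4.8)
The plan is to prove the inequality by Riemann--Stieltjes integration by parts, which moves the derivative off the test function $\phi$ and replaces the pairing $\int_0^1 f\,\phi'\,dm$ by a boundary term plus a pairing against the Stieltjes measure $df$ of total mass $\text{Var}(f)$. Concretely: fix a good representative of $f$ (one with the correct one--sided limits at the endpoints; such a representative exists since $f\in BV$), and take $\phi\in C^1$ with $\|\phi\|_{\infty}\le 1$. Since $\phi$ is continuous it has no discontinuity in common with $f$, so integration by parts is legitimate and
\[
\int_0^1 f\,\phi'\,dm = f(1)\phi(1)-f(0)\phi(0)-\int_{[0,1]}\phi\,df .
\]
Using $\|\phi\|_{\infty}\le 1$ and $\bigl|\int_{[0,1]}\phi\,df\bigr|\le\|\phi\|_{\infty}\,\text{Var}(f)$ gives $\bigl|\int_0^1 f\,\phi'\,dm\bigr|\le \text{Var}(f)+|f(0)|+|f(1)|$, and taking the supremum over admissible $\phi$ yields $\|f\|_{BV}\le \text{Var}(f)+|f(0)|+|f(1)|$.

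Next I would control the endpoint values of $f$, using crucially that $f$ is a \emph{density}, hence $f\ge 0$. For every $x\in(0,1)$, comparing the value at $x$ with the limit at the endpoint and using the definition of variation on a subinterval, one gets $f(0)\le f(x)+\text{Var}_{[0,x]}(f)$ and $f(1)\le f(x)+\text{Var}_{[x,1]}(f)$. Integrating both inequalities over $x\in[0,1]$, using $\int_0^1 f\,dm=\|f\|_{L^1}$ (nonnegativity) and the additivity $\text{Var}_{[0,x]}(f)+\text{Var}_{[x,1]}(f)=\text{Var}(f)$, one obtains $|f(0)|+|f(1)|\le 2\|f\|_{L^1}+\text{Var}(f)$. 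Combining this with the bound from the first step gives $\|f\|_{BV}\le 2\,\text{Var}(f)+2\|f\|_{L^1}$.

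The delicate point is the constant in front of $\text{Var}(f)$: the honest elementary argument above spends one unit of variation on the Stieltjes pairing and a second on the boundary terms. The statement as written matches this only under the (standard, in the ``$BV$ for measures'' setting recalled just before the lemma) convention that $\text{Var}(f)$ denotes the variation of $f$ extended by $0$ to all of $\mathbb{R}$, i.e. that it already includes the boundary jumps $|f(0)|+|f(1)|$; in that reading the first step alone gives $\|f\|_{BV}\le \text{Var}(f)\le \text{Var}(f)+2\|f\|_{L^1}$ immediately, and the role of the density hypothesis is only to make the remaining slack $2\|f\|_{L^1}$ meaningful. With the narrower ``variation of $f$ on $[0,1]$'' reading of the footnote, the boundary terms genuinely cost a second $\text{Var}(f)$ (and cannot be bounded by $\|f\|_{L^1}$ alone, as a density concentrated in a small neighbourhood of an endpoint shows), so one should either adopt the extended-variation convention or replace the factor $1$ by $2$; in the applications this is harmless since $\text{Var}(f_{\delta})$ is computed explicitly anyway.
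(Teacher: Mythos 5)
Your proof is correct for what it proves, and it takes a different route from the paper's. The paper splits the test function as $\phi=\phi_0+\tilde\phi$ with $\tilde\phi(x)=(\phi(1)-\phi(0))x$, bounds the contribution of $\tilde\phi$ by $2|\phi|_\infty\|f\|_{L^1}$, and then asserts ``by integration by parts'' that $|\int_0^1\phi_0'f\,dm|\le \text{Var}(f)$; you instead integrate by parts directly and control the boundary terms $|f(0)|+|f(1)|$ via $f(0)+f(1)\le 2\|f\|_{L^1}+\text{Var}(f)$, obtaining $\|f\|_{BV}\le 2\,\text{Var}(f)+2\|f\|_{L^1}$. (Positivity is not even needed for that endpoint estimate: $|f(0)|\le|f(x)|+\text{Var}_{[0,x]}(f)$ works verbatim.)

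The reservation you raise about the constant is not a gap in your argument but a genuine flaw in the paper's lemma and proof, read with the footnote's definition of $\text{Var}$ (variation on $[0,1]$, no boundary jumps). Take $f_\epsilon=\epsilon^{-1}1_{[0,\epsilon]}$: since the test functions in $\|\cdot\|_{BV}$ need not vanish at $0,1$ (the paper itself uses $\|\mu\|_{BV}\ge 2\|f\|_\infty$), choosing $\phi\in C^1$ with $\|\phi\|_\infty\le 1$, $\phi(0)=-1$, $\phi(\epsilon)=1$ gives $\|f_\epsilon\|_{BV}=2/\epsilon$, while $\text{Var}(f_\epsilon)+2\|f_\epsilon\|_{L^1}=1/\epsilon+2$, so the stated inequality fails for $\epsilon<1/2$. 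The same $\phi$, extended with $\phi(1)=1$, gives $\int_0^1\phi_0'f_\epsilon\,dm=2/\epsilon-2>\text{Var}(f_\epsilon)$, so the paper's intermediate claim is exactly where the boundary terms you worry about are being dropped: integration by parts yields $\phi(0)\,(f(1)-f(0))-\int\phi_0\,df$ with $\|\phi_0\|_\infty$ possibly as large as $3$, and these terms cannot be absorbed into a single $\text{Var}(f)$. Your two fixes are the right ones: either keep the honest constant, $\|f\|_{BV}\le 2\,\text{Var}(f)+2\|f\|_{L^1}$, or interpret $\text{Var}(f)$ as the variation of the extension of $f$ by zero (equivalently add $|f(0^+)|+|f(1^-)|$), in which case your first step alone gives the inequality. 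Accordingly, the numerical bound $\|f_\delta\|_{BV}<4.37$ used in Section \ref{meas2d} should be rechecked with the corrected inequality; as you note, this is harmless in practice because $f_\delta$ is explicitly known, so its variation and endpoint values are computable.
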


\begin{proof}
Let $\phi \in C^{1},|\phi |_{\infty }=1$; let $\tilde{\phi}%
=(\phi(1)-\phi(0))\cdot x$ and $\phi_0=\phi-\tilde{\phi}$. Then 
\begin{equation*}
|\int_0^1 \phi^{\prime }f dm|=|\int \phi_0^{\prime }f dm+\int
(\phi(1)-\phi(0)) f dm|\leq |\int \phi_0^{\prime }f dm|+2|\phi|_{\infty}\int
|f|dm,
\end{equation*}
and, as $\phi$ varies, by integration by parts we have that 
\begin{equation*}
||f||_{BV}\leq \text{Var}(f)+2 ||f||_{L^1}.
\end{equation*}
\end{proof}

The following inequality can be established (see \cite{GN}) showing that for
piecewise expanding maps the associated transfer operator is regularizing if
one consider the a suitable norm.

\begin{theorem}
\label{th8} If $T$ is piecewise expanding as above and $\mu $ is a measure
on $[0,1]$ 
\begin{equation*}
||L\mu ||_{BV}\leq \frac{2}{_{\inf T^{\prime }}}||\mu ||_{BV}+\frac{2}{\min
(d_{i}-d_{i+1})}\mu (1)+2\mu (|\frac{T^{\prime \prime }}{(T^{\prime })^{2}}%
|).
\end{equation*}
\end{theorem}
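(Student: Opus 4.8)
The plan is to unwind the two definitions at play and reduce everything to an integration by parts, branch by branch. Fix $\phi\in C^{1}([0,1])$ with $|\phi|_{\infty}\le 1$; by the definition of $||\cdot||_{BV}$ for measures and the duality $\int g\,dL\mu=\int (g\circ T)\,d\mu$, it suffices to bound $|\mu(\phi'\circ T)|$ by the right-hand side of the statement, uniformly in such $\phi$. If $||\mu||_{BV}=\infty$ there is nothing to prove, so I may assume $||\mu||_{BV}<\infty$; then, by the remark preceding Lemma~\ref{Lemma:BV}, $\mu$ is absolutely continuous with a bounded variation density $f$, and after a routine mollification I may treat $f$ as $C^{1}$ on each branch (a genuine BV $f$ with jumps only produces further jumps in $L\mu$, estimated in exactly the same way).

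Writing $J_{i}=(d_{i},d_{i+1})$, I would split $\mu(\phi'\circ T)=\sum_{i}\int_{J_{i}}(\phi'\circ T)\,f\,dm$. On each $J_{i}$ the chain rule gives $\phi'(T(x))=(\phi\circ T)'(x)/T'(x)$; set $\Psi_{i}=(\phi\circ T)/T'$ on $\overline{J_{i}}$, a $C^{1}$ function with $|\Psi_{i}|_{\infty}\le |\phi|_{\infty}/\inf|T'|$. The algebraic identity
\[
\frac{(\phi\circ T)'}{T'}=\Psi_{i}'+(\phi\circ T)\,\frac{T''}{(T')^{2}},
\]
combined with integration by parts on $J_{i}$, turns $\int_{J_{i}}(\phi'\circ T)f$ into three contributions: (i) the distortion term $\int_{J_{i}}(\phi\circ T)\tfrac{T''}{(T')^{2}}f\,dm$; (ii) the interior term $-\int_{J_{i}}\Psi_{i}f'\,dm$; and (iii) the boundary term $[\Psi_{i}f]_{d_{i}}^{d_{i+1}}$. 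Summing (i) over $i$ and using $|\phi|_{\infty}\le1$ bounds it by $\mu(|T''/(T')^{2}|)$, inside the term $2\mu(|T''/(T')^{2}|)$ of the statement; summing (ii) bounds it by $\tfrac{1}{\inf T'}\int|f'|=\tfrac{1}{\inf T'}\mathrm{Var}(f)$, which is absorbed into the $\tfrac{2}{\inf T'}||\mu||_{BV}$ term.

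The substantive work is in (iii). Collecting the boundary terms at a common interior breakpoint $d_{j}$, the coefficient of $f(d_{j})$ is the jump $\Psi_{j-1}(d_{j})-\Psi_{j}(d_{j})$, of absolute value at most $2/\inf T'$, while the two endpoints contribute $\Psi_{1}(0)f(0)$ and $\Psi_{n-1}(1)f(1)$, together of absolute value at most $\tfrac{1}{\inf T'}\,||\mu||_{BV}$ since $|f(0)|+|f(1)|\le ||f||_{BV}$. For each interior breakpoint I would bound $|f(d_{j})|$ by the average $|J_{j}|^{-1}\int_{J_{j}}|f|\,dm$ plus the local variation $\mathrm{Var}_{J_{j}}(f)$; summing over $j$, using that the $J_{j}$ are disjoint and each at least $\min_{k}(d_{k+1}-d_{k})$ wide, yields the term $\frac{2}{\min(d_{i+1}-d_{i})}\mu(1)$ together with a further $\mathrm{Var}(f)$-contribution swallowed by the $||\mu||_{BV}$ term. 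Assembling (i)--(iii) over all branches gives the claimed inequality.

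The main obstacle — and the only non-mechanical step — is this boundary bookkeeping: one has to treat the jumps of the piecewise-defined function $\sum_{i}1_{J_{i}}\Psi_{i}$ at the breakpoints correctly (equivalently, control the point values $f(d_{j})$ in terms of $||\mu||_{BV}$, $\mu(1)$ and the minimal branch width), justify the passage from a general BV density to a piecewise-$C^{1}$ one, and verify that the accumulated constants actually fit under the stated $2/\inf T'$, $2/\min(d_{i+1}-d_{i})$ and $2$. The duality reduction, the branch decomposition, the product-rule identity, and the distortion estimate (i) are all routine.
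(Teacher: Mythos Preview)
The paper does not actually prove this inequality: it states the result and refers to \cite{GN} for the argument. Your outline is the standard Lasota--Yorke computation, rewritten in the dual (measure) formulation of the BV norm, and that is precisely what the cited reference does; the reduction to $\mu(\phi'\circ T)$, the branchwise integration by parts, the identity for $\Psi_i'$, and the bound on the distortion term (i) are all correct and routine.

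There is, however, a small but genuine bookkeeping issue in your handling of the boundary terms (iii) that prevents the constant $\tfrac{2}{\inf T'}$ from coming out. By grouping the boundary contributions \emph{by breakpoint} and estimating each $|f(d_j)|$ via one adjacent interval, you pick up $\tfrac{2}{\inf T'}\,\mathrm{Var}(f)$ from the interior breakpoints on top of the $\tfrac{1}{\inf T'}\,\mathrm{Var}(f)$ already coming from (ii), which overshoots $\tfrac{2}{\inf T'}\,||\mu||_{BV}$. The standard cure is to group the boundary terms \emph{by branch} instead: on each $J_i=(d_i,d_{i+1})$ the boundary contribution is bounded by $\tfrac{1}{\inf T'}\bigl(|f(d_i)|+|f(d_{i+1})|\bigr)$, and the elementary lemma
\[
|g(a)|+|g(b)|\le \mathrm{Var}_{[a,b]}(g)+\frac{2}{b-a}\int_a^b|g|\,dm
\]
applied with $g=f$ on $J_i$ gives a per-branch bound $\tfrac{1}{\inf T'}\bigl(\mathrm{Var}_{J_i}(f)+\tfrac{2}{|J_i|}\int_{J_i}|f|\bigr)$. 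Adding this to the per-branch contribution $\tfrac{1}{\inf T'}\,\mathrm{Var}_{J_i}(f)$ from (ii) and summing over $i$ yields exactly $\tfrac{2}{\inf T'}\,\mathrm{Var}(f)+\tfrac{2}{\min_i|J_i|\cdot\inf T'}\,\mu(1)$, and since $\mathrm{Var}(f)\le||\mu||_{BV}$ and $\inf T'>1$, the stated constants follow. With this reorganisation (and the distortion term (i), which you already have), the proof is complete.
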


To use the above result in a computation, the problem is that $\mu (|\frac{%
T^{\prime \prime }}{(T^{\prime })^{2}}|)$ cannot be estimated without having
some information on $\mu $. Hence some refinement is necessary. Remark that
if $\mu $ has density $f$ then $||\mu ||_{BV}\geq 2||f||_{\infty }.$

To estimate $\mu (|\frac{T^{\prime \prime }}{(T^{\prime })^{2}}|)$ we
consider $I_{l}=\{x~s.t.~|\frac{T^{\prime \prime }}{(T^{\prime })^{2}}|\geq
l\}$. Let \ $f$ be the density of $\mu $ 
\begin{eqnarray*}
\mu (|\frac{T^{\prime \prime }}{(T^{\prime })^{2}}|) &=&\int_{I-I_{l}}|\frac{%
T^{\prime \prime }}{(T^{\prime })^{2}}|fdx+\int_{I_{l}}|\frac{T^{\prime
\prime }}{(T^{\prime })^{2}}|fdx\leq \\
&\leq &l\int_{I-I_{l}}fdx+||f||_{\infty }\int_{I_{l}}|\frac{T^{\prime \prime
}}{(T^{\prime })^{2}}|dx. \\
&\leq &l\int_{I-I_{l}}fdx+\frac{1}{2}||\mu ||_{BV}\int_{I_{l}}|\frac{%
T^{\prime \prime }}{(T^{\prime })^{2}}|dx
\end{eqnarray*}

If $l$ is chosen such that $\frac{1}{2}\int_{I_{l}}|\frac{T^{\prime \prime }%
}{(T^{\prime })^{2}}|+\frac{2}{_{\inf T^{\prime }}}=\lambda_1 <1$ then we have
the Lasota Yorke inequality which can be used for our purposes:%
\begin{equation}
||L\mu ||_{BV}\leq \lambda_1 ||\mu ||_{BV}+\frac{2}{\min (d_{i}-d_{i+1})}\mu
(1)+l\mu (1).
\end{equation}

\begin{remark}\label{lyrem}

We remark that once an inequality of the form
\begin{equation*}
||Lg||_{{\cal B} '}\leq 2\lambda_1 ||g||_{{\cal B} '}+B^{\prime }||g||_{\cal B}.
\end{equation*}
 is established (with $2\lambda_1 <1$) then, iterating, we have
\begin{equation*}
||L^{n}g||_{{\cal B} '}\leq 2^{n}\lambda_1 ^{n}||Lg||_{{\cal B} '}+\frac{1}{1-2\lambda }B^{\prime
}||g||_{\cal B}
\end{equation*}%
and the coefficient
\[B=\frac{1}{1-2\lambda_1 }B^{\prime
}\] 
can be used to bound the norm of the invariant measure.
\end{remark}

\subsection{An approximation inequality\label{appineq}}

Here we prove an inequality which is used in Remark \ref{r6}.

\begin{lemma}
\label{lemp}For piecewise expanding maps, if $L_{\delta }$ is a Ulam
discretization of size $\delta $, for every measure $f$ \ having \ bounded
variation we have that 
\begin{equation*}
||(L-L_{\delta })f||_{L^1}\leq \delta (2\lambda _{1}+1)||f||_{BV}+\delta
B'||f||_{L^1}
\end{equation*}
\end{lemma}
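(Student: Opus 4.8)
The plan is to use the representation of the Ulam operator as $L_\delta = \pi_\delta L \pi_\delta$, where $\pi_\delta := \mathbf{E}(\cdot\mid F_\delta)$ is the conditional expectation onto the $\sigma$-algebra generated by the partition into intervals of length at most $\delta$; this is precisely the content of \eqref{000}. The first step is to telescope
\begin{equation*}
(L-L_\delta)f \;=\; Lf - \pi_\delta L \pi_\delta f \;=\; \big(Lf - \pi_\delta Lf\big) \;+\; \pi_\delta L\big(f - \pi_\delta f\big),
\end{equation*}
and to estimate the two summands separately in the $L^{1}$ norm. The one elementary ingredient needed twice is that, for $g$ of bounded variation, $\|g-\pi_\delta g\|_{L^{1}}\le\delta\,\text{Var}(g)$: on each partition interval $I_k$ the average of $g$ over $I_k$ lies between $\inf_{I_k}g$ and $\sup_{I_k}g$, whence $\int_{I_k}|g-(\text{average of }g\text{ over }I_k)|\,dm\le |I_k|\,(\sup_{I_k}g-\inf_{I_k}g)\le\delta\,\text{Var}_{I_k}(g)$, and summing over $k$ gives $\delta\,\text{Var}(g)$.

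For the second summand, recall that both the transfer operator $L$ and the conditional expectation $\pi_\delta$ are contractions for $\|\cdot\|_{L^{1}}$, so $\|\pi_\delta L(f-\pi_\delta f)\|_{L^{1}}\le\|f-\pi_\delta f\|_{L^{1}}\le\delta\,\text{Var}(f)\le\delta\,\|f\|_{BV}$; the last inequality holds because $\text{Var}(g)\le\|g\|_{BV}$ for the norm of Section \ref{LY}, it being enough to test against the subclass of $C^{1}$ functions with compact support in $(0,1)$. For the first summand the same estimate gives $\|Lf-\pi_\delta Lf\|_{L^{1}}\le\delta\,\text{Var}(Lf)\le\delta\,\|Lf\|_{BV}$, and now I would invoke the Lasota--Yorke inequality established in Section \ref{LY} (in the form recorded in Remark \ref{lyrem}), namely $\|Lf\|_{BV}\le 2\lambda_{1}\|f\|_{BV}+B'\|f\|_{L^{1}}$. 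Adding the two bounds,
\begin{equation*}
\|(L-L_\delta)f\|_{L^{1}}\le\delta\big(2\lambda_{1}\|f\|_{BV}+B'\|f\|_{L^{1}}\big)+\delta\,\|f\|_{BV}=\delta(2\lambda_{1}+1)\|f\|_{BV}+\delta B'\|f\|_{L^{1}},
\end{equation*}
which is the assertion.

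There is no deep difficulty here; the points that require a little care are (i) the identification $L_\delta=\pi_\delta L\pi_\delta$ together with the standard fact that neither $L$ nor $\pi_\delta$ expands the $L^{1}$ norm, and (ii) pinning down the comparison $\text{Var}(g)\le\|g\|_{BV}$ between the function-theoretic total variation used in the interval estimate and the measure norm $\|\cdot\|_{BV}$ in which the Lasota--Yorke inequality is phrased, so that the bound on $\|Lf\|_{BV}$ can be chained into the estimate for $\|Lf-\pi_\delta Lf\|_{L^{1}}$. Once these are in place the displayed computation closes the proof immediately.
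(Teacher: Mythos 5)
Your proof is correct and follows essentially the same route as the paper: the same splitting of $(L-L_\delta)f$ through the intermediate term $\mathbf{E}(Lf|\mathcal{F}_\delta)$, the $L^1$-contraction property of $L$ and of the conditional expectation, the estimate $\|g-\mathbf{E}(g|\mathcal{F}_\delta)\|_{L^1}\le\delta\|g\|_{BV}$, and the Lasota--Yorke inequality applied to $\|Lf\|_{BV}$. The only difference is that you spell out the elementary variation estimate and the comparison between $\mathrm{Var}$ and the $\|\cdot\|_{BV}$ norm, which the paper leaves implicit.
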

Where $B'$ is the second coefficient of the Lasota Yorke Inequality related to the map.

\begin{proof}
It holds 
\begin{equation*}
||(L-L_{\delta })f||_{L^1}\leq ||\mathbf{E}(L(\mathbf{E}(f|\mathcal{F}_{\delta
})|\mathcal{F}_{\delta }))-\mathbf{E}(Lf|\mathcal{F}_{\delta }))||_{L^1}+||%
\mathbf{E}(Lf|\mathcal{F}_{\delta })-Lf||_{L^1},
\end{equation*}%
But 
\begin{equation*}
\mathbf{E}(L(\mathbf{E}(f|\mathcal{F}_{\delta })|\mathcal{F}_{\delta }))-%
\mathbf{E}(Lf|\mathcal{F}_{\delta }))=\mathbf{E}[L(\mathbf{E}(f|\mathcal{F}%
_{\delta })-f)|\mathcal{F}_{\delta }].
\end{equation*}%
Since both $L$ and the conditional expectation are $L^{1}$ contractions 
\begin{equation*}
\mathbf{E}(L(\mathbf{E}(f|\mathcal{F}_{\delta })|\mathcal{F}_{\delta }))-%
\mathbf{E}(Lf|\mathcal{F}_{\delta }))\leq ||\mathbf{E}(f|\mathcal{F}_{\delta
})-f||_{L^1}.
\end{equation*}

For \ a bounded variation measure $f$ it is easy to see that $\ ||\mathbf{E}%
(f|\mathcal{F}_{\delta })-f||_{L^1}\leq \delta \cdot ||f||_{BV}.$

By this 
\begin{equation*}
\mathbf{E}(L(\mathbf{E}(f|\mathcal{F}_{\delta })|\mathcal{F}_{\delta }))-%
\mathbf{E}(Lf|\mathcal{F}_{\delta }))\leq \delta ||f||_{BV}.
\end{equation*}

On the other hand%
\begin{equation*}
||\mathbf{E}(Lf|\mathcal{F}_{\delta })-Lf||_{L^1}\leq \delta ||Lf||_{BV}\leq
\delta (2\lambda _{1}||f||_{BV}+B'||f||_{L^1})
\end{equation*}

which gives 
\begin{equation}
||(L-L_{\delta })f||_{L^1}\leq \delta (2\lambda _{1}+1)||f||_{BV}+\delta
B'||f||_{L^1}  \label{1iter}
\end{equation}
\end{proof}

\end{document}